\documentclass[12pt, reqno]{amsart}

\usepackage{amssymb,latexsym}
\usepackage{mathrsfs}
\usepackage{a4wide}
\usepackage{amsthm}
\usepackage[v2,tips]{xy}
\usepackage[T1]{fontenc}
\usepackage[latin1]{inputenc}

\newtheorem{theorem}{Theorem}[section]
\newtheorem{lemma}[theorem]{Lemma}

\newtheorem{corollary}[theorem]{Corollary}

\theoremstyle{definition}
\newtheorem{example}[theorem]{Example}

\numberwithin{equation}{section}

\newcounter{smallromans}

\newenvironment{romanenumerate}
{\begin{list}{{\normalfont\textrm{(\roman{smallromans})}}}%
  {\usecounter{smallromans}\setlength{\itemindent}{0cm}%
   \setlength{\leftmargin}{5.5ex}\setlength{\labelwidth}{5.5ex}%
   \setlength{\topsep}{.5ex}\setlength{\partopsep}{.5ex}%
   \setlength{\itemsep}{0.1ex}}}%
{\end{list}}

\newcommand{\romanref}[1]{{\normalfont\textrm{(\ref{#1})}}}

\newcounter{smallromansdash}

{\end{list}}

\newcounter{bigromans} 
  {\end{list}}

\newcounter{smallarabics}
{\end{list}}

\newcounter{smallalphs}

{\end{list}}

\newcommand{\alphref}[1]{{\normalfont\textrm{(\ref{#1})}}}

\renewcommand{\leq}{\ensuremath{\leqslant}}
\renewcommand{\le}{\ensuremath{\leqslant}}
\renewcommand{\geq}{\ensuremath{\geqslant}}

\newcommand{\N}{\mathbb{N}}
\newcommand{\Z}{\mathbb{Z}}
\newcommand{\R}{\mathbb{R}}
\newcommand{\C}{\mathbb{C}}

\newcommand{\ip}{\operatorname{IP}}
\newcommand{\inv}{\operatorname{inv}}

\newcommand{\spa}{\operatorname{span}}

\newcommand{\tr}{\operatorname{Tr}}

\newcommand{\smashw}[2][l]{{\text{\makebox[0pt][#1]{$#2$}}}}

\renewcommand{\epsilon}{\ensuremath{\varepsilon}}
\renewcommand{\phi}{\ensuremath{\varphi}}

\title[Banach spaces with $K_0(\mathscr{B}(X))\cong\mathbb{Z}$]{Banach
  spaces whose algebra of bounded operators has the integers as their
  $K_0$-group}

\author[Kania]{Tomasz Kania}%
\email{tomasz.marcin.kania@gmail.com}%
\address{Institute of Mathematics, Polish Academy of Sciences,
  ul. \'Sniadeckich 8, 00-956 War\-sza\-wa, Poland.}
\author[Koszmider]{Piotr Koszmider}\email{P.Koszmider@Impan.pl}%
\address{Institute of Mathematics, Polish Academy of Sciences,
  ul. \'Sniadeckich 8, 00-956 War\-sza\-wa, Poland.}
\author[Laustsen]{Niels Jakob Laustsen}\email{n.laustsen@lancaster.ac.uk}%
\address{Department of Mathematics and Statistics, Fylde College
  Lancaster University, Lancaster LA1 4YF, United Kingdom.}

\subjclass[2010]{Primary: 
19K99,
47L10;
Secondary: 19A49,
46E15}

\date{} \keywords{$K_0$-group; Banach algebra; bounded, linear
  operator; Banach space; continuous functions on the first
  uncountable ordinal interval.}
\begin{document}

\begin{abstract} 
  Let $X$ and $Y$ be Banach spaces such that the ideal of operators
  which factor through~$Y$ has codimension one in the Banach
  alge\-bra~$\mathscr{B}(X)$ of all bounded opera\-tors on~$X$, and
  suppose that~$Y$ contains a complemented subspace which is
  isomorphic to~$Y\oplus Y$ and that $X$ is isomorphic to $X\oplus Z$
  for every complemented subspace~$Z$ of~$Y$. Then the $K_0$-group
  of~$\mathscr{B}(X)$ is isomorphic to the additive group~$\mathbb{Z}$
  of integers.

  A number of Banach spaces which satisfy the above conditions are
  identified. No\-ta\-bly, it follows that
  $K_0(\mathscr{B}(C([0,\omega_1])))\cong\mathbb{Z}$,
  where~$C([0,\omega_1])$ denotes the Banach space of scalar-valued,
  continuous functions defined on the compact Hausdorff space of
  ordinals not exceeding the first uncountable ordinal~$\omega_1$,
  endowed with the order topology.
\end{abstract}

\maketitle

\section{Introduction}\label{section1}
\noindent
The purpose of this note is to prove that, for certain Banach
spaces~$X$, the $K_0$-group of the Banach algebra~$\mathscr{B}(X)$ of
(bounded, linear) operators on~$X$ is isomorphic to the additive
group~$\mathbb{Z}$ of integers. More precisely, our main result, which
will be proved in Section~\ref{proofofmainthm}, is as follows.

\begin{theorem}\label{thm26Nov2013}
  Let $X$ and~$Y$ be Banach spaces such that:
  \begin{romanenumerate}
  \item\label{thm26Nov2013ii} $Y$ contains a complemented subspace
    which is isomorphic to~$Y\oplus Y;$
  \item\label{thm26Nov2013iii} $X$ is isomorphic to $X\oplus Z$ for
    every complemented subspace~$Z$ of~$Y;$ and
  \item\label{thm26Nov2013i} the ideal $\{TS : S\in\mathscr{B}(X,Y),\,
    T\in\mathscr{B}(Y,X) \}$ of operators on~$X$ that factor
    through~$Y$ has codimension one in~$\mathscr{B}(X)$.
  \end{romanenumerate}
  Then the mapping
  \begin{equation}\label{thm26Nov2013eq1}
    n\mapsto n[I_X]_0,\quad\mathbb{Z}\to K_0(\mathscr{B}(X)), 
  \end{equation} 
  is an isomorphism of abelian groups, where $[I_X]_0$ denotes the
  $K_0$-class of the identity opera\-tor on~$X$.
\end{theorem}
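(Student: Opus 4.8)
The plan is to show that the quotient map
$\pi\colon\mathscr{B}(X)\to Q:=\mathscr{B}(X)/\mathscr{I}$, where $\mathscr{I}=\{TS:S\in\mathscr{B}(X,Y),\,T\in\mathscr{B}(Y,X)\}$ is the codimension-one ideal of~\romanref{thm26Nov2013i}, induces an isomorphism $\pi_*\colon K_0(\mathscr{B}(X))\to K_0(Q)\cong\mathbb{Z}$, and that~\eqref{thm26Nov2013eq1} is its inverse. Here $Q$ is one-dimensional and unital, so $K_0(Q)\cong\mathbb{Z}$; since $\pi(I_X)=1_Q$ we have $\pi_*([I_X]_0)=1$, whence $\pi_*$ is surjective and $\pi_*\circ(n\mapsto n[I_X]_0)=\operatorname{id}_{\mathbb{Z}}$. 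Thus it suffices to prove that $\pi_*$ is injective, and because every class in $K_0(\mathscr{B}(X))$ is a difference $[p]_0-[q]_0$ of classes of idempotents in the matrix algebras $M_k(\mathscr{B}(X))\cong\mathscr{B}(X^k)$, injectivity reduces to the single assertion that $[p]_0=m\,[I_X]_0$ in $K_0(\mathscr{B}(X))$ for every idempotent $p\in\mathscr{B}(X^k)$, where $m:=\operatorname{rank}\pi_k(p)$ and $\pi_k\colon\mathscr{B}(X^k)\cong M_k(\mathscr{B}(X))\to M_k(Q)$ is the induced map. (I would first record that~\romanref{thm26Nov2013ii} makes $\mathscr{I}$ closed under addition, a sum of two operators factoring through~$Y$ factoring through $Y\oplus Y$ and hence through~$Y$, so that $\mathscr{I}$ really is a two-sided ideal.)

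The engine is the following absorption lemma, which is where~\romanref{thm26Nov2013iii} enters: \emph{if $e\in\mathscr{B}(X^k)$ is an idempotent whose range is isomorphic to a complemented subspace~$Z$ of~$Y$, then $[e]_0=0$.} Indeed, $X^k\cong X^k\oplus Z$ by~\romanref{thm26Nov2013iii} (applied $k-1$ times to absorb the copies of~$X$), so there is an idempotent $\eta\in\mathscr{B}(X^k)$ with range isomorphic to~$X^k$ and complementary range $(1-\eta)X^k$ isomorphic to~$Z$. Two idempotents of $\mathscr{B}(X^k)$ are Murray--von Neumann equivalent exactly when their ranges are isomorphic as Banach spaces, so $[\eta]_0=[I_{X^k}]_0$ and $[1-\eta]_0=[e]_0$; adding the orthogonal idempotents~$\eta$ and $1-\eta$ gives $[I_{X^k}]_0=[\eta]_0+[1-\eta]_0=[I_{X^k}]_0+[e]_0$, so $[e]_0=0$.

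Granting the lemma, the displayed assertion follows once I establish the structural fact that \emph{the range of~$p$ is isomorphic to $X^m\oplus Z$ for some complemented subspace~$Z$ of~$Y$}: for then $p$ is Murray--von Neumann equivalent to $\operatorname{diag}(I_{X^m},e_Z)$ with $e_Z$ an idempotent of range~$\cong Z$, so $[p]_0=m\,[I_X]_0+[e_Z]_0=m\,[I_X]_0$ by the lemma. To reach this fact I would conjugate~$p$ by the lift $g_0\otimes I_X$ of an invertible scalar matrix diagonalising the idempotent $\pi_k(p)$, reducing to the case $\pi_k(p)=\operatorname{diag}(1_m,0)$; writing $D=\operatorname{diag}(I_{X^m},0)$, every entry of $p-D$ then lies in~$\mathscr{I}$, and the standard intertwiner $u=pD+(1-p)(1-D)$ satisfies $pu=uD$ together with $u=1+w$ for some $w\in M_k(\mathscr{I})$, so that~$u$ is invertible \emph{modulo}~$\mathscr{I}$.

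The main obstacle is precisely this last step: $u$ is only invertible modulo~$\mathscr{I}$, and converting this into the honest decomposition $X^m\oplus Z$ is the heart of the matter. I expect to resolve it by factoring $w=\beta\alpha$ through a complemented subspace $W$ of~$Y$ (using~\romanref{thm26Nov2013ii} to collapse~$Y^k$ to~$Y$, so that $Z$ may be taken inside~$Y$), and then invoking the Whitehead-type identity exhibiting $\bigl(\begin{smallmatrix} 1+\beta\alpha & \beta\\ \alpha & 1_W\end{smallmatrix}\bigr)$ as a product of elementary matrices over $\mathscr{B}(X^k\oplus W)$, hence an invertible operator bordering~$u$. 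Conjugating the appropriately extended idempotents by this invertible operator, and then absorbing the extra summand through $X^k\oplus W\cong X^k$ from~\romanref{thm26Nov2013iii}, should transport the resulting similarity back into $\mathscr{B}(X^k)$ and let me read off $\operatorname{range}(p)\cong X^m\oplus Z$. With the range so decomposed, the absorption lemma completes the proof that $[p]_0=m\,[I_X]_0$; hence $\pi_*$ is an isomorphism and~\eqref{thm26Nov2013eq1} is its inverse.
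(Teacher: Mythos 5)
Your global strategy coincides with the paper's: pass to the one\nobreakdash-dimensional quotient, note that surjectivity is immediate, and reduce injectivity to the single claim that $[p]_0=m[I_X]_0$ for every idempotent $p\in\mathscr{B}(X^k)$, where $m$ is the rank of the scalar idempotent $\pi_k(p)$; your absorption lemma is exactly the paper's treatment of the rank-zero case (first paragraph of the proof of Lemma~\ref{claim1}). The proof therefore stands or falls with the case $0<m<k$, and this is where there is a genuine gap. After reducing to $p=D+r$ with $D=\operatorname{diag}(I_{X^m},0)$ and $r\in M_k(\mathscr{I})$, you form $u=pD+(1-p)(1-D)=1+w$ with $w\in M_k(\mathscr{I})$, factor $w=\beta\alpha$ through $Y$ (your $W$ may indeed be taken to be $Y$ itself, by condition~\romanref{thm26Nov2013ii}), and border $u$ to the invertible operator $\tilde u=\bigl(\begin{smallmatrix}1+\beta\alpha&\beta\\ \alpha&1_Y\end{smallmatrix}\bigr)$ on $X^k\oplus Y$ via the Whitehead identity. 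But $\tilde u$ does not conjugate $p\oplus 0$ to $D\oplus 0$: the identity $pu=uD$ lives only in the $X^k$ corner, and the conjugation identity $\tilde u^{-1}(p\oplus 0)\tilde u=D\oplus 0$ would in addition require $p\beta=0$ and $\alpha D=0$, which fail in general --- explicitly, $\tilde u^{-1}(p\oplus 0)\tilde u=\bigl(\begin{smallmatrix}pu&p\beta\\ -\alpha pu&-\alpha p\beta\end{smallmatrix}\bigr)$. No ``appropriately extended idempotents'' for which a conjugation identity actually holds are exhibited, so the step you yourself call the heart of the matter is asserted rather than proved. A secondary problem is that your intermediate target, $\operatorname{range}(p)\cong X^m\oplus Z$, is stronger than what these hypotheses deliver: the correct output of this circle of ideas is $\operatorname{range}(p)\oplus Z'\cong X^m$ with $Z'$ isomorphic to a complemented subspace of~$Y$ (one cannot cancel $Z'$ to reach your form), and this weaker statement already suffices for the $K_0$-computation.

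The paper closes exactly this gap without ever producing an invertible operator: it exploits the fact that $p$ itself, not just $u$, factors through $X^m\oplus Y$. Writing $D=VU$, where $U\colon X^k\to X^m$ and $V\colon X^m\to X^k$ are the coordinate projection and embedding, and $r=p-D=TS$ with $S\colon X^k\to Y$ and $T\colon Y\to X^k$ (again a single $Y$ suffices by condition~\romanref{thm26Nov2013ii}), one has $p=(V\;\,T)\bigl(\begin{smallmatrix}U\\ S\end{smallmatrix}\bigr)$. Condition~\romanref{thm26Nov2013iii} supplies an isomorphism $W\colon X^m\to X^m\oplus Y$, and then $Q=W^{-1}\bigl(\begin{smallmatrix}U\\ S\end{smallmatrix}\bigr)\,p\,(V\;\,T)\,W\in\mathscr{B}(X^m)$ is idempotent with range isomorphic to that of~$p$ by Lemma~\ref{STSTlemma}, whence $p\sim_0 Q$ by Lemma~\ref{isomorphicranges}. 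Since $Q$ has full scalar rank~$m$, your absorption lemma applied to $I_{X^m}-Q$ yields $[Q]_0=[I_{X^m}]_0-[I_{X^m}-Q]_0=m[I_X]_0$, which is precisely the missing case. So your proposal is reparable, but only by replacing the Whitehead-bordering/conjugation step with this factorization argument (or by proving an intertwining identity that your $\tilde u$ simply does not satisfy); as written, the proof is incomplete.
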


\noindent
As a consequence, we shall deduce in Section~\ref{section4} that the
$K_0$-group of~$\mathscr{B}(X)$ is isomorphic to~$\Z$ for a number of
Banach spaces~$X$, including the following: 
\begin{romanenumerate}
\item $X = C([0,\omega_1])$, the Banach space of scalar-valued,
  continuous functions defined on the compact Hausdorff space of
  ordinals not exceeding the first uncountable ordinal~$\omega_1$,
  endowed with the order topology;
\item $X = C(K)$ or $X = C(K)\oplus Y$, where~$K$ is the compact
  Hausdorff space constructed by the second-named author
  in~\cite{Koszmidermrowka}, assuming either the Con\-tinu\-um
  Hypothesis or Mar\-tin's Axiom together with the negation of the
  Continuum Hypothesis, and where~$Y$ is a Banach space which is
  isomorphic to the $\ell_p$- or $c_0$-direct sum of countably many
  copies of itself for some $p\in[1,\infty)$, $Y$~contains a
  complemented subspace that is iso\-mor\-phic to~$c_0$, and no
  complemented subspace of~$Y$ is isomorphic to~$C(K)$;
\item $X = X_{\text{AH}}\oplus C_p$, where $X_{\text{AH}}$ is Argyros
  and Haydon's Banach space which solves the scalar-plus-compact
  problem (see~\cite{ah}), $p\in[1,\infty]$, and $C_p$ is Johnson's
  $p^{\text{th}}$ universal space through which all approximable
  operators factor (see~\cite{joh});
\item $X = W\oplus Y$, where~$W$ is the non-separable Banach space
  constructed by Shelah and Stepr\={a}ns~\cite{ss} such that every
  operator on~$W$ is a scalar multiple of the identity plus an
  operator with separable range, and $Y$ is the $\ell_p$-direct sum of
  a certain family of separable subspaces of~$W$ for some
  $p\in(1,\infty)$; see Example~\ref{ShelahSteprans} for details.
\end{romanenumerate}

We also obtain a couple of known results as consequences of
Theorem~\ref{thm26Nov2013}, namely that $K_0(\mathscr{B}(X))\cong\Z$
for $X = J_p$ or $X = J_p(\omega_1)$, where $p\in(1,\infty)$ and $J_p$
denotes the $p^{\text{th}}$ quasi-reflexive James space, while
$J_p(\omega_1)$ denotes Edgar's long version of it (see~\cite{james}
and~\cite{edgar}, respectively).

It is known that the $K_0$-group of~$\mathscr{B}(X)$ vanishes for most
``classical'' Banach spaces~$X$, including every Banach space~$X$
which is primary and isomorphic to its square~$X\oplus X$ (see
\cite[Proposition~2.3]{lauJLMS}). By contrast, the existence of an
ideal of finite codimension in~$\mathscr{B}(X)$ implies that the
$K_0$-class~$[I_X]_0$ of the identity operator on~$X$ is an element of
infinite order in~$K_0(\mathscr{B}(X))$. Theorem~\ref{thm26Nov2013}
can therefore be viewed as a minimality result
for~$K_0(\mathscr{B}(X))$: by condition~\romanref{thm26Nov2013i},
$[I_X]_0$~has infinite order in~$K_0(\mathscr{B}(X))$,
and~\eqref{thm26Nov2013eq1} states that this element generates the
whole group.

\section{Preliminaries}\label{section2}
\noindent
We shall begin by outlining the definition of the $K_0$-group of a
unital ring~$\mathscr{A}$; further details can be found in standard
texts such as~\cite{bl} and~\cite{rll}. For $m,n\in\N$, we denote
by~$M_{m,n}(\mathscr{A})$ the additive group of $(m\times n)$-matrices
over~$\mathscr{A}$. We write $M_n(\mathscr{A})$ instead
of~$M_{n,n}(\mathscr{A})$; this is a unital ring.  Define
\[ \ip_n(\mathscr{A}) = \{ P \in M_n(\mathscr{A}) : P^2 = P\},\quad
\text{the set of idempotent}\ (n\times n)\text{-matrices over}\
\mathscr{A}. \] Given $P\in\ip_m(\mathscr{A})$ and
$Q\in\ip_n(\mathscr{A})$, where $m,n\in\N$, we say that~$P$ and~$Q$
are \emph{algebraically equivalent}, written \mbox{$P\sim_0 Q$}, if $P
= AB$ and $Q = BA$ for some $A \in M_{m,n}(\mathscr{A})$ and $B\in
M_{n,m}(\mathscr{A})$. This defines an equivalence relation~$\sim_0$
on the set $\ip_{\infty}(\mathscr{A}) =
\bigcup_{n\in\N}\ip_n(\mathscr{A})$, and the quotient $V(\mathscr{A})
= \ip_{\infty}(\mathscr{A})/\mathord{\sim_0}$ is an abelian semigroup
with respect to the operation
\begin{equation}\label{K0operation} \bigl([P]_V,[Q]_V\bigr)\mapsto
  \left[\begin{pmatrix}P & 0 \\ 0 & Q \end{pmatrix}\right]_V,\quad
  V(\mathscr{A})\times V(\mathscr{A})\to
  V(\mathscr{A}), \end{equation} where $[P]_V$ denotes the equivalence
class of $P\in\ip_{\infty}(\mathscr{A})$ in~$V(\mathscr{A})$.  The
$K_0$-group of~$\mathscr{A}$, denoted by $K_0(\mathscr{A})$, is now
defined as the Grothen\-dieck group of $V(\mathscr{A})$. The
fundamental property of the Grothen\-dieck group implies that we have the
following \emph{standard picture} of $K_0(\mathscr{A})$:
\begin{equation}\label{sixtheq}
  K_0(\mathscr{A}) = \bigl\{[P]_0 - [Q]_0 :
  P,Q\in\ip_{\infty}(\mathscr{A})\bigr\},
\end{equation}
where $[P]_0$ is the canonical image of $[P]_V$ in $K_0(\mathscr{A})$.

Let $n\in\N$, and suppose that $P,Q\in\ip_n(\mathscr{A})$ are
\emph{orthogonal}, in the sense that $PQ = 0 = QP$. Then $P + Q$ is
idempotent, and the formula for addition in $K_0(\mathscr{A})$ takes
the following simple form:
\begin{equation}\label{fourtheq} [P]_0 + [Q]_0 = [P + Q]_0.
\end{equation}

We shall require one more basic property of~$K_0$: given a ring
homomorphism \mbox{$\phi\colon\mathscr{A}\to\mathscr{C}$}
(where~$\mathscr{C}$, like~$\mathscr{A}$, is a unital ring, but $\phi$
need not be unital) and $n\in\N$, we can define a ring homomorphism
$\phi_n\colon M_n(\mathscr{A})\to M_n(\mathscr{C})$ by entry\-wise
appli\-ca\-tion: \begin{equation}\label{defnphin}
  \phi_n\bigl((A_{j,k})_{j,k=1}^n\bigr) =
  \bigl(\phi(A_{j,k})\bigr)_{j,k=1}^n. \end{equation} This induces a
group homomorphism $K_0(\phi)\colon K_0(\mathscr{A})\to
K_0(\mathscr{C})$ which satisfies
\begin{equation}\label{K0functor} K_0(\phi)([P]_0) = [\phi_n(P)]_0\qquad 
  (n\in\N,\,P\in\ip_n(\mathscr{A})).
\end{equation}

In Sections~\ref{section1}--\ref{section4}, all Banach spaces and
algebras are considered over a fixed scalar field $\mathbb{K} = \R$ or
$\mathbb{K} = \C$, whereas in Appendix~\ref{section5}, we shall
consider complex scalars only.

It is well known and elementary that the
standard (un\-normalized) trace \[ \tr_n\colon\
(\lambda_{j,k})_{j,k=1}^n\mapsto \sum_{j=1}^n \lambda_{j,j},\quad
M_n(\mathbb{K})\to\mathbb{K}, \] induces an isomorphism
$K_0(\tr)\colon K_0(\mathbb{K})\to\Z$ which satisfies
\begin{equation}\label{generator27Feb2013} 
  K_0(\tr)([P]_0)  = \tr_n (P)\qquad (n\in\N,\, P\in\ip_n(\mathbb{K}))
\end{equation} 
(see for instance \cite[Example~3.3.2]{rll} for a proof for
$\mathbb{K} = \C$; the proof for $\mathbb{K} = \R$ is similar).

The symbol $C(K)$ denotes the Banach space of scalar-valued,
continuous functions defined on a compact Hausdorff space~$K$.  By an
\emph{operator}, we understand a bounded, linear mapping between
Banach spaces.  For Banach spaces~$X$ and~$Y$, we
write~$\mathscr{B}(X,Y)$ for the Banach space of all opera\-tors
from~$X$ into~$Y$, and we identify $M_{m,n}(\mathscr{B}(X,Y))$ with
the Banach space $\mathscr{B}(X^n,Y^m)$ of operators from~$X^n$
into~$Y^m$, where $X^n$ denotes the direct sum of $n$ copies of~$X$,
equipped with the norm $\bigl\|(x_1,\ldots,x_n)\bigr\| =
\max\bigl\{\|x_1\|,\ldots,\|x_n\|\bigr\}$.  We write~$\mathscr{B}(X)$
instead of~$\mathscr{B}(X,X)$; this is a unital Banach algebra. We
denote by~$I_X$ the identity operator on~$X$.

The following easy observation clarifies the meaning of the
relation~$\sim_0$ in this case.

\begin{lemma}\label{isomorphicranges}
  Let $X$ be a Banach space, and let
  $P,Q\in\ip_\infty(\mathscr{B}(X))$. Then $P\sim_0 Q$ if and only if
  the ranges of~$P$ and~$Q$ are isomorphic.
\end{lemma}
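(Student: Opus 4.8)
The plan is to establish both implications directly from the definition of $\sim_0$, working throughout with the identification of $M_{m,n}(\mathscr{B}(X))$ with $\mathscr{B}(X^n,X^m)$. Thus $P\in\ip_m(\mathscr{B}(X))$ and $Q\in\ip_n(\mathscr{B}(X))$ become bounded idempotent operators on $X^m$ and $X^n$, respectively, so that $\image P$ and $\image Q$ are closed, complemented subspaces of $X^m$ and $X^n$ (the range of a bounded idempotent being always complemented).

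For the forward implication, suppose $P = AB$ and $Q = BA$ for some $A\in\mathscr{B}(X^n,X^m)$ and $B\in\mathscr{B}(X^m,X^n)$. The first step is to record the intertwining identities $PA = ABA = AQ$ and $QB = BAB = BP$. These show that $A$ maps $\image Q$ into $\image P$ and that $B$ maps $\image P$ into $\image Q$; indeed, for $x\in\image Q$ we have $P(Ax) = A(Qx) = Ax$, and symmetrically for $B$. The second step is to observe that the resulting restrictions are mutually inverse: for $x\in\image Q$ we obtain $B(Ax) = (BA)x = Qx = x$, while for $y\in\image P$ we obtain $A(By) = (AB)y = Py = y$. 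Hence the restriction $A|_{\image Q}\colon\image Q\to\image P$ is a bounded bijection with bounded inverse $B|_{\image P}$, giving $\image P\cong\image Q$.

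For the reverse implication, assume there is an isomorphism $U\colon\image Q\to\image P$ with bounded inverse $V\colon\image P\to\image Q$. The idea is to extend $U$ and $V$ to operators on the ambient direct sums by precomposing with the idempotents. Define $A\in\mathscr{B}(X^n,X^m)$ and $B\in\mathscr{B}(X^m,X^n)$ by $A = UQ$ and $B = VP$, where $Q$ and $P$ are regarded as surjections onto $\image Q$ and $\image P$ and the inclusions of $\image P$ and $\image Q$ back into $X^m$ and $X^n$ are suppressed. Since $VP$ takes values in $\image Q$, on which $Q$ acts as the identity, we get $AB = U(QVP) = U(VP) = (UV)P = P$; symmetrically $BA = V(PUQ) = V(UQ) = (VU)Q = Q$. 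Thus $P = AB$ and $Q = BA$, so $P\sim_0 Q$.

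The argument is essentially a matter of correctly tracking domains and codomains under the matrix--operator identification; no genuine difficulty arises, since boundedness of every map involved is automatic (the ranges of bounded idempotents are complemented, and the definition of isomorphism of Banach spaces supplies bounded $U$ and $V$). The only point requiring a little care is to confirm that $A$ and $B$, built from the restricted isomorphisms, genuinely belong to $M_{m,n}(\mathscr{B}(X))$ and $M_{n,m}(\mathscr{B}(X))$, which is immediate once the suppressed inclusions are reinstated.
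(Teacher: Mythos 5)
Your proof is correct. The paper states Lemma~\ref{isomorphicranges} as an ``easy observation'' and supplies no proof at all, and your argument --- the intertwining identities $PA = AQ$ and $QB = BP$ yielding mutually inverse bounded restrictions $A|_{\image Q}$ and $B|_{\image P}$ in one direction, and the factorization $A = UQ$, $B = VP$ (with the inclusions of $\image P$ and $\image Q$ reinstated) in the other --- is precisely the standard verification that the omitted proof would consist of, with the matrix--operator identification and all domain/codomain bookkeeping handled correctly.
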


We shall also require the following related result.

\begin{lemma}[{\cite[Lemma~3.9(ii)]{laustsenMaximal}}]%
\label{STSTlemma}
Let $X$ and $Y$ be Banach spaces, and let $S\in\mathscr{B}(X, Y)$ and
$T\in\mathscr{B}(Y, X)$ be operators such that $ST$ is
idempotent. Then $TSTS$ is idempotent, and the ranges of~$ST$
and~$TSTS$ are isomorphic.
\end{lemma}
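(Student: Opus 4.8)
The statement has two parts: that $TSTS$ is idempotent, and that its range is isomorphic to that of~$ST$. The plan is to dispatch the first by a direct computation using only the hypothesis $(ST)^2 = ST$, and to handle the second by exhibiting an explicit pair of mutually inverse isomorphisms between the two ranges, rather than by appealing to Lemma~\ref{isomorphicranges} directly. Note that $ST$ is an idempotent on~$Y$ whereas $TSTS$ is an idempotent on~$X$, so the two ranges sit inside \emph{different} spaces, and Lemma~\ref{isomorphicranges}, being stated for a single space, does not literally apply.

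For idempotency, I would abbreviate the hypothesis as $STST = ST$ and expand $(TSTS)^2 = TSTSTSTS$, collapsing an interior block $STST$ to $ST$ twice: first $TSTSTSTS = T(STST)STS = T(ST)STS = TSTSTS$, and then $TSTSTS = T(STST)S = T(ST)S = TSTS$. Hence $(TSTS)^2 = TSTS$. The only thing to watch is the bookkeeping of domains and codomains: each interior factor $STST$ really is an operator $Y\to Y$ that equals $ST$ by hypothesis, so the substitutions are legitimate.

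For the ranges, I would use the fact that the range of an idempotent is exactly its set of fixed points, and then check that $T$ maps $\image(ST)$ into $\image(TSTS)$ while $STS$ maps $\image(TSTS)$ into $\image(ST)$: if $STy = y$ then $TSTS(Ty) = T(STST)y = T(ST)y = Ty$, and if $TSTSx = x$ then $ST(STSx) = (STST)Sx = (ST)Sx = STSx$. These two restrictions are mutually inverse, since $STS\circ T = STST = ST$ is the identity on $\image(ST)$ and $T\circ STS = TSTS$ is the identity on $\image(TSTS)$; being restrictions of bounded operators, they furnish the desired isomorphism. Equivalently, one may phrase this as the algebraic equivalence $ST = AB \sim_0 BA = TSTS$ with $A = STS$ and $B = T$, which is the cross-space form of the argument underlying Lemma~\ref{isomorphicranges}.

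The genuinely non-routine step is spotting the factorization $A = STS$, $B = T$ (equivalently, the maps $T$ and $STS$ between the ranges); once these are in hand, every verification reduces to a one-line use of $STST = ST$. I would expect the domain/codomain bookkeeping across the two different spaces $X$ and~$Y$ to be the only real source of error.
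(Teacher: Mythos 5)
Your proof is correct: the collapsing computation $STST=ST$ gives idempotency of $TSTS$, and the restrictions of $T$ and $STS$ to the (closed) ranges are mutually inverse bounded maps, hence the desired isomorphism. Since the paper merely cites this lemma from \cite[Lemma~3.9(ii)]{laustsenMaximal} rather than proving it, there is no in-paper proof to compare against; your argument is the standard one underlying the cited result, and your observation that Lemma~\ref{isomorphicranges} cannot be invoked verbatim here---because $ST$ and $TSTS$ act on the different spaces $Y$ and $X$, so the cross-space equivalence $ST=AB\sim_0 BA=TSTS$ with $A=STS$, $B=T$ must be checked directly---is accurate and worth making explicit.
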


Given Banach spaces~$X$, $Y$ and~$Z$, we define
\[ \mathscr{G}_Y(X,Z) = \spa\{TS : S\in\mathscr{B}(X,Y),\,
T\in\mathscr{B}(Y,Z) \}. \] This is an operator ideal in the sense of
Pietsch provided that~$Y$ is non-zero. The `span' is not necessary
when $Y$ contains a complemented subspace which is isomorphic
to~\mbox{$Y\oplus Y$}; in this case the set $\{TS :
S\in\mathscr{B}(X,Y),\, T\in\mathscr{B}(Y,Z) \}$ is automatically a
linear subspace of~$\mathscr{B}(X,Z)$. In line with standard practice,
we write $\mathscr{G}_Y(X)$ instead of~$\mathscr{G}_Y(X,X)$.

\section{The proof of
  Theorem~\ref{thm26Nov2013}}\label{proofofmainthm}
\noindent
Throughout this section we shall suppose that~$X$ and~$Y$ are Banach
spaces which satisfy
con\-di\-tions~\romanref{thm26Nov2013ii}--\romanref{thm26Nov2013i} of
Theorem~\ref{thm26Nov2013}. The third of these conditions implies that
we can de\-fine a bounded, unital algebra homomorphism
$\phi\colon\mathscr{B}(X)\to\mathbb{K}$ by
\begin{equation}\label{eqDefnphi} \phi(\lambda I_X +
  T) = \lambda\qquad (\lambda\in\mathbb{K},\, T\in\mathscr{G}_Y(X)). 
\end{equation}
Recall that~$M_n(\mathscr{B}(X))$ has been identified with
$\mathscr{B}(X^n)$ for each $n\in\N$. Under this identification, we
have $M_n(\mathscr{G}_Y(X)) = \mathscr{G}_Y(X^n)$
because~$\mathscr{G}_Y$ is an operator ideal, and hence
\begin{equation}\label{matrixfactorization}
  \ker\phi_n = M_n(\ker\phi) = \mathscr{G}_Y(X^n) = \{ TS : 
  S\in\mathscr{B}(X^n,Y),\, T\in\mathscr{B}(Y,X^n)\},
\end{equation}
where the final equality follows from the fact that~$Y$ satisfies
condition~\romanref{thm26Nov2013ii}.

The following lemma is the key step in the proof of the surjectivity
of the mapping~\eqref{thm26Nov2013eq1}.

\begin{lemma}\label{claim1}
  Let $P\in \ip_n(\mathscr{B}(X))$ for some $n\in\N$, and
  set $k = \tr_n\circ\,\phi_n(P)$. Then
  \[ [P]_0 = k\cdot [I_X]_0\qquad \text{in}\qquad
  K_0(\mathscr{B}(X)). \]
\end{lemma}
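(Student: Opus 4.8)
The plan is to prove $[P]_0=k[I_X]_0$ by showing that, after adding a suitable complemented subspace of~$Y$, the range of~$P$ becomes a copy of~$X^k$. Throughout I identify $M_n(\mathscr{B}(X))$ with $\mathscr{B}(X^n)$ and use Lemma~\ref{isomorphicranges} to pass between idempotents and their ranges.

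First I would observe that $\Lambda:=\phi_n(P)$ is an idempotent in $M_n(\mathbb{K})$, so that $k=\tr_n(\Lambda)$ is its rank. As any two idempotent matrices of equal rank over a field are conjugate, I may pick an invertible scalar matrix $U\in M_n(\mathbb{K})\subseteq\mathscr{B}(X^n)$ with $U\Lambda U^{-1}=E_k$, the diagonal idempotent whose first~$k$ diagonal entries are~$1$ and the rest~$0$. Passing from~$P$ to $UPU^{-1}$ alters neither $[P]_0$ (conjugation by an invertible operator preserves the range up to isomorphism) nor~$k$ (traces are conjugation-invariant), so I may assume $\phi_n(P)=E_k$, that is, $P=\widehat{E}_k+T$ where $\widehat{E}_k=\operatorname{diag}(I_X,\dots,I_X,0,\dots,0)$ (with $k$ ones) and $T\in\mathscr{G}_Y(X^n)=\ker\phi_n$ by~\eqref{matrixfactorization}; note $[\widehat{E}_k]_0=k[I_X]_0$ by~\eqref{fourtheq}. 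If $k=0$ then $P=T$ factors through~$Y$, so Lemma~\ref{STSTlemma} makes $\operatorname{ran}P$ isomorphic to a complemented subspace of~$Y$, whence $X\cong X\oplus\operatorname{ran}P$ by~\romanref{thm26Nov2013iii} and $[P]_0=0$ follows from the stabilisation argument below; so assume $k\geq1$.

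Next, split $X^n=X^k\oplus X^{n-k}$ along the pattern of~$E_k$, with coordinate projections $\pi_1,\pi_2$. Since every entry of~$P$ in the last $n-k$ rows lies in $\mathscr{G}_Y(X)$, and a sum of operators factoring through~$Y$ again factors through~$Y$ by~\romanref{thm26Nov2013ii}, the operator $\pi_2 P$ factors through~$Y$; write $\pi_2 P=\sigma_2\sigma_1$ with $\sigma_1\in\mathscr{B}(X^n,Y)$, $\sigma_2\in\mathscr{B}(Y,X^{n-k})$. The heart of the argument is the pair
\[ \Phi\colon\operatorname{ran}P\to X^k\oplus Y,\ \zeta\mapsto(\pi_1\zeta,\sigma_1\zeta),\qquad \Psi\colon X^k\oplus Y\to\operatorname{ran}P,\ (x,y)\mapsto P(x,\sigma_2 y). \]
A short computation using $\sigma_2\sigma_1\zeta=\pi_2 P\zeta=\pi_2\zeta$ for $\zeta\in\operatorname{ran}P$ gives $\Psi\Phi=\operatorname{id}_{\operatorname{ran}P}$; hence $\Phi\Psi$ is an idempotent on $X^k\oplus Y$ with range isomorphic to $\operatorname{ran}P$, and writing $J:=\operatorname{id}_{X^k\oplus Y}-\Phi\Psi$ and $E_1:=\operatorname{ran}J$ we obtain $X^k\oplus Y\cong\operatorname{ran}P\oplus E_1$. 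Expanding $\Phi\Psi$ blockwise shows $\Phi\Psi=\pi_{X^k}+g_0$ with $g_0$ factoring through~$Y$, so that $J=\pi_Y-g_0$ also factors through~$Y$. I expect this to be the main obstacle: one must verify that the error idempotent~$J$ really does factor through~$Y$, which hinges on the off-pattern blocks of~$P$ doing so together with the closure of $\mathscr{G}_Y$ under addition.

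Finally, applying Lemma~\ref{STSTlemma} to the factorisation of the idempotent~$J$ through~$Y$ shows that $E_1=\operatorname{ran}J$ is isomorphic to a complemented subspace of~$Y$. Condition~\romanref{thm26Nov2013iii} then gives $X^k\cong X^k\oplus Y$ and $X^k\cong X^k\oplus E_1$ (as $k\geq1$), and combining these with $X^k\oplus Y\cong\operatorname{ran}P\oplus E_1$ yields $\operatorname{ran}P\oplus E_1\cong X^k\cong X^k\oplus E_1$. Since $E_1$ is isomorphic to a complemented subspace of~$X$, I may choose $R\in\ip_1(\mathscr{B}(X))$ with range isomorphic to~$E_1$; then Lemma~\ref{isomorphicranges} upgrades the last isomorphism to $P\oplus R\sim_0\widehat{E}_k\oplus R$, so $[P]_0+[R]_0=[\widehat{E}_k]_0+[R]_0$, and cancelling $[R]_0$ in the group $K_0(\mathscr{B}(X))$ gives $[P]_0=[\widehat{E}_k]_0=k[I_X]_0$.
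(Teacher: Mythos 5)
Your proof is correct, and while it shares its opening moves with the paper's, the endgame is genuinely different. The delicate points all check out: $\Psi\Phi=\operatorname{id}_{\operatorname{ran}P}$ holds because $\sigma_2\sigma_1\zeta=\pi_2P\zeta=\pi_2\zeta$ for $\zeta\in\operatorname{ran}P$; the blockwise claim that $J=\operatorname{id}_{X^k\oplus Y}-\Phi\Psi$ factors through~$Y$ is sound, since each block of $\Phi\Psi-\iota_{X^k}\pi_{X^k}$ either has domain or codomain~$Y$ (hence factors trivially) or is a corner of $T\in\mathscr{G}_Y(X^n)$, and condition~\romanref{thm26Nov2013ii} lets you assemble these blocks into a single product, exactly as in~\eqref{matrixfactorization}; and cancelling $[R]_0$ is legitimate because $K_0(\mathscr{B}(X))$ is a group (this is also how the paper finishes its own $k=0$ case, so your forward reference to ``the stabilisation argument below'' is harmless). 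Now the comparison. Both proofs diagonalize the scalar idempotent $\phi_n(P)$, factor the perturbation through~$Y$ via~\eqref{matrixfactorization}, and treat $k=0$ identically (Lemma~\ref{STSTlemma} plus condition~\romanref{thm26Nov2013iii} plus cancellation). But for $1\le k\le n-1$ the paper compresses $P$, via the isomorphism $W\colon X^k\to X^k\oplus Y$ and Lemma~\ref{STSTlemma}, to an idempotent $Q\in\mathscr{B}(X^k)$ with $\operatorname{ran}Q\cong\operatorname{ran}P$ and \emph{full} scalar trace~$k$, and then invokes a separate intermediate case $k=n$, which is itself reduced to $k=0$ by passing to the complementary idempotent $I_{X^n}-P$; the complement of $\operatorname{ran}P$ is never analysed directly. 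You dispense with the full-trace case entirely: you exhibit the complement $E_1$ of a copy of $\operatorname{ran}P$ inside $X^k\oplus Y\cong X^k$ explicitly, prove the associated idempotent~$J$ factors through~$Y$, and cancel its $K_0$-class. The paper's trace bookkeeping buys precisely the thing you flag as your main burden — it never has to verify that an ``error idempotent'' factors through~$Y$, because the complementary-idempotent trick does that work invisibly — whereas your version buys a two-case rather than three-case structure and a sharper geometric conclusion (for $k\ge1$, $\operatorname{ran}P$ is complemented in~$X^k$ with complement isomorphic to a complemented subspace of~$Y$), at the cost of the blockwise analysis of~$\Phi\Psi$.
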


\begin{proof}
  We shall first establish the result for $k=0$. In this case we have
  $\phi_n(P) = 0$ because the zero matrix is the only idempotent,
  scalar-valued matrix with trace zero, so that $P=TS$ for some
  operators $S\colon X^n\to Y$ and $T\colon Y\to X^n$
  by~\eqref{matrixfactorization}. Lemma~\ref{STSTlemma} then implies
  that the operator $Q = SPT\in\mathscr{B}(Y)$ is idempotent with
  $Q[Y]\cong P[X^n]$. Combining this with
  condition~\romanref{thm26Nov2013iii}, we obtain $X\oplus P[X^n]\cong
  X\oplus Q[Y]\cong X$; that is, the operators
  $\bigl(\begin{smallmatrix} I_X & 0\\ 0 & P\end{smallmatrix}\bigr)$
  and~$I_X$ have isomorphic ranges. Hence $[I_X]_0 + [P]_0 = [I_X]_0$
  in $K_0(\mathscr{B}(X))$ by~\eqref{K0operation} and
  Lemma~\ref{isomorphicranges}, so that $[P]_0 = 0$, as required.

  We shall next consider the case where $k=n$. Then
  $\tr_n\circ\,\phi_n(I_{X^n} - P) = 0$, so that $[I_{X^n}-P]_0 = 0$
  by the result established in the first paragraph of the
  proof. Hence, by~\eqref{fourtheq} and~\eqref{K0operation}, we
  conclude that
  \[ [P]_0 = [P]_0 + [I_{X^n}-P]_0 = [I_{X^n}]_0 = n\cdot
  [I_X]_0\qquad \text{in}\qquad K_0(\mathscr{B}(X)). \]

  Finally, suppose that $k\in\{1,2,\ldots,n-1\}$.  Since $\phi_n(P)$
  is an idempotent, scalar-valued matrix, it is diagonalizable, so
  that there exists $R\in \ker\phi_n$ such that $\Delta_k+R$ is
  idempotent and $P\sim_0 \Delta_k+R$, where
  \[ \Delta_k = \begin{pmatrix} I_{X^k} & 0\\ 0 &
    0 \end{pmatrix}\in\ip_n(\mathscr{B}(X)). \]
  By~\eqref{matrixfactorization}, we can find operators $S\colon
  X^n\to Y$ and $T\colon Y\to X^n$ such that $R = TS$. Moreover,
  $\Delta_k$ has an obvious factorization as $\Delta_k = VU$, where
  $U\colon X^n\to X^k$ and $V\colon X^k\to X^n$ denote the projection
  onto the first~$k$ coordinates and the embedding into the first~$k$
  coordinates, respectively.  Condition~\romanref{thm26Nov2013iii}
  implies that there exists an isomorphism $W\colon X^k\to X^k\oplus
  Y$, and we then have a commutative diagram
  \[ \spreaddiagramrows{5ex}\spreaddiagramcolumns{5ex} \xymatrix{%
    X^n\ar[rr]^-{\displaystyle{\Delta_k+R}} \ar[d]_(0.5)
    {\begin{pmatrix} U\\ S\end{pmatrix}} & & X^n\\ X^k\oplus
    Y\ar[r]^-{\displaystyle{W^{-1}}} & X^k\ar[r]^-{\displaystyle{W}} &
    X^k\oplus Y\smashw{,}\ar[u]_(0.5){\begin{pmatrix} V &
        T \end{pmatrix}}} \] where the operators
  $\bigl(\begin{smallmatrix} U\\ S\end{smallmatrix}\bigr)$ and $(V\;\
  T)$ are given by $x\mapsto (Ux, Sx)$ and $(x,y)\mapsto Vx + Ty$,
  respectively. Hence Lemma~\ref{STSTlemma} shows that the operator
  \[ Q = W^{-1}\begin{pmatrix}U\\S\end{pmatrix}(\Delta_k +
  R)\begin{pmatrix}V & T\end{pmatrix}W \in \mathscr{B}(X^k) \] is
  idempotent and the ranges of~$Q$ and~$\Delta_k+R$ are isomorphic, so
  that $Q\sim_0 P$ by Lemma~\ref{isomorphicranges}. The trace property
  implies that $\tr_k\circ\,\phi_k(Q) = \tr_n\circ\,\phi_n(\Delta_k +
  R) = k$, and therefore, as shown in the second paragraph of the
  proof, we have $k\cdot[I_X]_0 = [Q]_0 = [P]_0$, as required.
\end{proof}

\begin{proof}[Proof of Theorem~{\normalfont{\ref{thm26Nov2013}}}]
  We shall show below that the group
  homomorphism \begin{equation}\label{thm26Nov2013eq2} K_0(\tr)\circ
    K_0(\phi)\colon K_0(\mathscr{B}(X))\to\Z \end{equation} is an
  isomorphism.  Since $K_0(\tr)\circ K_0(\phi)([I_X]_0) = 1$, which
  generates the group~$\Z$, the mapping given
  by~\eqref{thm26Nov2013eq1} is the inverse of this isomorphism, and
  hence the conclusion follows.

  The surjectivity of the homomorphism~\eqref{thm26Nov2013eq2} is
  immediate because, as observed above, its range contains the
  generator~$1$ of the group~$\Z$.

  To see that the homomorphism~\eqref{thm26Nov2013eq2} is injective,
  suppose that $g\in\ker K_0(\tr)\circ K_0(\phi)$. By~\eqref{sixtheq},
  we have $g = [P]_0 - [Q]_0$ for some $P\in\ip_m(\mathscr{B}(X))$ and
  $Q\in\ip_n(\mathscr{B}(X))$, where $m,n\in\N$.
  Using~\eqref{K0functor} and~\eqref{generator27Feb2013}, we obtain
  \[ 0 = K_0(\tr)\circ K_0(\phi)(g)= \tr_m\circ\, \phi_m(P) -
  \tr_n\circ\, \phi_n(Q). \] This implies that $[P]_0 = [Q]_0$ by
  Lemma~\ref{claim1}, so that $g=0$. \end{proof}

\section{Applications}\label{section4}
\begin{example}\label{mrowkaexample}
  Assuming either the Continuum Hypothesis or Martin's Axiom together
  with the negation of the Continuum Hypothesis, the second-named
  author~\cite{Koszmidermrowka} has constructed a scattered compact
  Hausdorff space~$K$ such that:
  \begin{enumerate}
  \item\label{mrowkaexample1} the ideal~$\mathscr{X}(C(K))$ of
    operators with separable range has codimension one
    in $\mathscr{B}(C(K));$
  \item\label{mrowkaexample2} every separable subspace of~$C(K)$ is
    contained in a subspace which is isomorphic to~$c_0;$
  \item\label{mrowkaexample3} whenever~$C(K)$ is decomposed into a
    direct sum of two closed, infinite-dimensional subspaces~$A$
    and~$B$, either $A\cong c_0$ and $B\cong C(K)$, or \emph{vice
      versa.}
  \end{enumerate}
  We claim that $X = C(K)$ and $Y = c_0$ satisfy
  conditions~\romanref{thm26Nov2013ii}--\romanref{thm26Nov2013i} of
  Theorem~{\normalfont{\ref{thm26Nov2013}}}. Indeed,
  \romanref{thm26Nov2013ii}~is clear, while~\romanref{thm26Nov2013i}
  follows from condition~\alphref{mrowkaexample1}, above, together
  with~\cite[Theorem~5.5]{KK}, where it is shown that
  $\mathscr{X}(C(K)) = \mathscr{G}_{c_0}(C(K))$. Finally, to
  verify~\romanref{thm26Nov2013iii}, we observe that:
  \begin{itemize}
  \item $C(K)$ contains a complemented subspace which is isomorphic
    to~$c_0$ because $K$ is scattered, and consequently $C(K)\cong
    C(K)\oplus c_0$ by~\alphref{mrowkaexample3}, above; and
  \item every complemented subspace~$Z$ of~$c_0$ is either
    finite-dimensional or isomorphic to~$c_0$, and thus $c_0\oplus
    Z\cong c_0$.
  \end{itemize}
  Hence $C(K)\oplus Z\cong C(K)\oplus c_0\oplus Z\cong C(K)\oplus
  c_0\cong C(K)$, as required.
  Thus we conclude that $K_0(\mathscr{B}(C(K)))\cong\Z$. It is not
  known whether a compact space with the same properties as~$K$ can be
  constructed within ZFC.
\end{example}

In order to facilitate further applications of Theorem~\ref{thm26Nov2013},
we shall show that certain standard prop\-er\-ties of Banach spaces ensure
that the first two conditions of Theorem~\ref{thm26Nov2013} are
satisfied.
\begin{lemma}\label{lemma29Nov2013}
  Let $X$ and~$Y$ be Banach spaces such that~$X$ is isomorphic
  to~$X\oplus Y$, and suppose that~$Y$ satisfies (at least) one of the
  following two conditions:
  \begin{enumerate}
  \item\label{lemma29Nov2013i} $Y$ is isomorphic to the $\ell_p$- or
    the $c_0$-direct sum of countably many copies of it\-self for some
    $p\in[1,\infty);$ or
  \item\label{lemma29Nov2013ii} $Y$ is primary and contains a
    complemented subspace which is isomorphic to~$Y\oplus Y$.
  \end{enumerate}
  Then
  conditions~\romanref{thm26Nov2013ii}--\romanref{thm26Nov2013iii} of
  Theorem~{\normalfont{\ref{thm26Nov2013}}} are satisfied.
\end{lemma}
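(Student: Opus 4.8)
The plan is to verify conditions~\romanref{thm26Nov2013ii} and~\romanref{thm26Nov2013iii} of Theorem~\ref{thm26Nov2013} under each of the two alternative hypotheses on~$Y$, keeping in mind that condition~\romanref{thm26Nov2013i} is \emph{not} asserted here (the lemma only claims \romanref{thm26Nov2013ii}--\romanref{thm26Nov2013iii}). First I would observe that condition~\romanref{thm26Nov2013ii}, namely that~$Y$ contains a complemented copy of~$Y\oplus Y$, is immediate in both cases: under~\romanref{lemma29Nov2013ii} it is part of the hypothesis, while under~\romanref{lemma29Nov2013i}, if $Y\cong\bigl(\bigoplus_{k\in\N}Y\bigr)_{\ell_p}$ (or the $c_0$-analogue), then splitting the index set~$\N$ into two infinite halves yields $Y\cong Y\oplus Y$ outright, which certainly contains a complemented copy of itself.

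The substance of the lemma lies in condition~\romanref{thm26Nov2013iii}: for every complemented subspace~$Z$ of~$Y$ we must show $X\cong X\oplus Z$. I would begin from the standing hypothesis $X\cong X\oplus Y$, which I would like to bootstrap into $X\cong X\oplus Y^{(\N)}$ (the appropriate countable direct sum), so that a single copy of any complemented~$Z\le Y$ can be absorbed. In case~\romanref{lemma29Nov2013i} this is clean: since $Y\cong Y^{(\N)}$ and $Z$ is complemented in~$Y$, write $Y\cong Z\oplus V$; then a Pe\l czy\'nski-style decomposition or a direct telescoping gives $X\cong X\oplus Y\cong X\oplus Y\oplus Y\cong\cdots$, and using $Y\cong Z\oplus V$ together with $Y\cong Y\oplus Y$ one absorbs~$Z$ into the infinite tail, yielding $X\oplus Z\cong X\oplus Z\oplus Y\cong X\oplus Y\cong X$. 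The key algebraic move is that $Z\oplus Y\cong Z\oplus Z\oplus V\oplus(\text{rest})\cong Y$ by the self-similarity of~$Y$.

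In case~\romanref{lemma29Nov2013ii} I would argue as follows. Let $Z$ be complemented in~$Y$, say $Y\cong Z\oplus V$. Because $Y$ contains a complemented copy of $Y\oplus Y$, primarity of~$Y$ forces that copy to have a summand isomorphic to~$Y$; iterating and combining with the Pe\l czy\'nski decomposition trick (using that $Y$ contains $Y\oplus Y$ complementably so that $Y\cong Y\oplus Y\oplus W$ for some~$W$, hence $Y$ behaves like its own infinite sum after applying the trick with $X$), I would show $X\cong X\oplus Y\cong X\oplus Y\oplus Y$. The primarity hypothesis is what allows me to conclude that any complemented~$Z\le Y$ is, together with the freedom to add further copies of~$Y$, absorbable: writing $Y\cong Z\oplus V$ and using $X\cong X\oplus(Y\oplus Y)$, one gets $X\oplus Z\cong X\oplus Z\oplus Y\oplus Y\cong X\oplus(Z\oplus V)\oplus Z\oplus(\text{adjust})\cong X\oplus Y\oplus(\cdots)\cong X$.

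The main obstacle I anticipate is making the absorption of an \emph{arbitrary} complemented summand~$Z$ fully rigorous in case~\romanref{lemma29Nov2013ii}, since there primarity gives information only about decompositions of~$Y$ itself, not about~$Z$; the crux is to leverage the complemented copy of~$Y\oplus Y$ to run a Pe\l czy\'nski decomposition argument that turns the single relation $X\cong X\oplus Y$ into the self-reproducing relation $X\cong X\oplus Y\cong X\oplus Y\oplus Y$, after which every complemented~$Z\le Y$ is swallowed by one spare copy of~$Y$ via $Z\oplus Y'\cong Y$ for a suitable complemented $Y'\cong Y$. I would double-check that the infinite-sum manipulations in case~\romanref{lemma29Nov2013i} are valid for both the $\ell_p$ and $c_0$ norms (they are, since the decompositions are isometric regroupings of the index set), and that no separate argument is needed for finite-dimensional~$Z$.
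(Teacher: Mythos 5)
Your handling of condition~\romanref{thm26Nov2013ii} and of case~\alphref{lemma29Nov2013i} is sound and matches the paper: there the point is the absorption $Y\oplus Z\cong Y$, obtained from the Pe\l{}czy\'{n}ski decomposition method (equivalently, your telescoping/regrouping of the infinite $\ell_p$- or $c_0$-sum), after which $X\oplus Z\cong X\oplus Y\oplus Z\cong X\oplus Y\cong X$.

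Case~\alphref{lemma29Nov2013ii}, however, contains a genuine gap, and it stems from a misreading of primarity. You write that ``primarity gives information only about decompositions of $Y$ itself, not about $Z$'' --- but $Y\cong Z\oplus V$ \emph{is} a decomposition of $Y$, so primarity applies directly and yields the dichotomy: either $Z\cong Y$ or $V\cong Y$. That dichotomy is all one needs, and it is exactly what the paper uses: if $Z\cong Y$, then $X\oplus Z\cong X\oplus Y\cong X$ at once; if $V\cong Y$, then $Y\cong Z\oplus V\cong Y\oplus Z$, and the same three-step computation as in case~\alphref{lemma29Nov2013i} finishes. Your substitute strategy does not close the gap. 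First, the intermediate relation $X\cong X\oplus Y\oplus Y$ is true but trivial --- just iterate $X\cong X\oplus Y$ twice; no Pe\l{}czy\'{n}ski argument is needed, nor is one available, since in case~\alphref{lemma29Nov2013ii} $Y$ is not assumed isomorphic to any infinite direct sum of copies of itself. Second, and more seriously, the absorption claim on which your argument rests --- that $Z\oplus Y'\cong Y$ for a suitable complemented $Y'\cong Y$, i.e.\ $Z\oplus Y\cong Y$ --- is not a consequence of the hypotheses: take $Z=Y$ (certainly complemented in~$Y$), so the claim becomes $Y\oplus Y\cong Y$, which is nowhere assumed; a primary space containing a complemented copy of its square need not be isomorphic to its square, because primarity applied to $Y\cong Y\oplus(Y\oplus Y)$ gives no information (the first summand is already isomorphic to~$Y$). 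Correspondingly, your final chain of isomorphisms hides the difficulty in the unspecified terms ``(adjust)'' and ``$(\cdots)$'' precisely where the stray copy of~$Z$ has to disappear, and no finite number of substitutions $Y\cong Z\oplus V$ will make it disappear without invoking the primarity dichotomy.
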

\begin{proof}
 Condition~\romanref{thm26Nov2013ii} of Theorem~\ref{thm26Nov2013} is
 evidently satisfied in both cases.

  To verify condition~\romanref{thm26Nov2013iii}, suppose that~$Z$ is a
  complemented subspace of~$Y$.

  In case~\alphref{lemma29Nov2013i}, we observe that~$Y\cong Y\oplus
  Y$, so that~$Y$ contains a complemented subspace which is isomorphic
  to~$Y\oplus Z$. On the other hand, $Y\oplus Z$ evidently contains a
  complemented subspace which is isomorphic to~$Y$. Hence the
  Pe\l{}czy\'{n}ski decomposition method (as stated in
  \cite[Theorem~2.23(b)]{ak}, for instance) implies that $Y\cong
  Y\oplus Z$. Combining this with the assumption that $X\cong X\oplus
  Y$, we obtain
  \begin{equation}\label{lemma29Nov2013eq1} 
    X\oplus Z\cong X\oplus Y\oplus Z\cong X\oplus Y\cong X, 
  \end{equation}
  as required.

  In case~\alphref{lemma29Nov2013ii}, either $Y\cong Z$ or $Y\cong
  Y\oplus Z$ because~$Y$ is primary. In the first case, $X\oplus
  Z\cong X$ is immediate from the fact that $X\cong X\oplus Y$, and in
  the second case the calculation~\eqref{lemma29Nov2013eq1}, above,
  applies to give this conclusion.
\end{proof} 

\begin{example}\label{applicationsofmainthmComega1}
  For an ordinal~$\alpha$, denote by~$[0,\alpha]$ the compact
  Hausdorff space consisting of all ordinals not exceeding~$\alpha$,
  endowed with the order topology, and set $X = C([0,\omega_1])$ and
  $Y = \bigl(\bigoplus_{\alpha<\omega_1} C([0,\alpha])\bigr)_{c_0}$,
  where~$\omega_1$ denotes the first uncountable ordinal. Then: 
  \begin{itemize}
  \item $X\cong X\oplus Y$ by \cite[Lemma~2.14(iv) and
    Corollary~2.16]{KKL};
  \item $Y$ is isomorphic to the $c_0$-direct sum of countably many
    copies of itself by \cite[Lemma~2.12]{KKL}, so that
    condition~\alphref{lemma29Nov2013i} of Lemma~\ref{lemma29Nov2013}
    is satisfied (in fact, condition~\alphref{lemma29Nov2013ii} is
    also satisfied by \cite[Corollary~1.3]{KL});
  \item the ideal $\mathscr{G}_Y(X)$ has codimension one
    in~$\mathscr{B}(X)$ by \cite[Theorem~1.6]{KKL}.
  \end{itemize}
  Hence Lemma~\ref{lemma29Nov2013} and Theorem~\ref{thm26Nov2013}
  apply, so that $K_0(\mathscr{B}(X))\cong\Z$.

  This example provided the original motivation behind
  Theorem~\ref{thm26Nov2013}. We have since learnt that a different
  approach is possible for complex scalars, using a result of
  Edelstein and Mityagin, and this result also shows that the
  $K_1$-group of $\mathscr{B}(C([0,\omega_1]))$ vanishes; we refer to
  Appendix~\ref{section5} for details.
\end{example} 

\begin{example}\label{applicationsofmainthmI}
  Let $p\in (1,\infty)$, and let~$X = J_p$ be the $p^{\text{th}}$
  quasi-reflexive James space, which is defined by $J_p = \{ x\in c_0
  : \|x\|_{J_p}<\infty\}$, where
  \[ \| x\|_{J_p} =
  \sup\biggl\{\Bigl(\sum_{j=1}^m|x_{k_j}-x_{k_{j+1}}|^p\Bigr)^{\frac{1}{p}}
  : m,k_1,\ldots,k_{m+1}\in\N,\,k_1<
  k_2<\cdots<k_{m+1}\biggr\}\in[0,\infty] \] for each scalar sequence
  $x = (x_k)_{k\in\N}$.  (This space was first considered for $p=2$ by
  James~\cite{james} and later generalized to arbitrary~$p$ by
  Edelstein and Mityagin~\cite{em}.) Moreover, let $Y =
  \bigl(\bigoplus_{n\in\N}J_p^{(n)}\bigr)_{\ell_p}$, where $J_p^{(n)}$
  denotes the $n$-dimensional subspace of~$J_p$ consisting of those
  elements which vanish from the $(n+1)^{\text{st}}$ coordinate
  onwards.  Then $X\cong X\oplus Y$ and~$Y$ is isomorphic to the
  $\ell_p$-direct sum of countably many copies of itself by
  \cite[Lemmas~5--6]{em}. (Note, however, that a key condition appears
  to be missing in the statement of \cite[Lemma~5]{em}, namely that
  the sequence denoted by~$\nu$ is unbounded.) Further, the
  ideal~$\mathscr{G}_Y(X)$ has codimension one in~$\mathscr{B}(X)$ by
  \cite[Theorem~4.3]{laustsenCommutators}, so that
  Lemma~\ref{lemma29Nov2013} and Theorem~\ref{thm26Nov2013} show that
  $K_0(\mathscr{B}(J_p))\cong\Z$. This reproves
  \cite[Theorem~4.6]{laustsenKtheory}, whose proof inspired our proof
  of The\-o\-rem~\ref{thm26Nov2013}, above.

  Kochanek and the first-named author have observed that the results
  obtained in \cite[Section~3]{KK} ensure that the proof of
  \cite[Theorem~4.6]{laustsenKtheory} carries over to \emph{Edgar's
    long James space}~$J_p(\omega_1)$, originally introduced
  in~\cite{edgar}, so that $K_0(\mathscr{B}(J_p(\omega_1)))\cong\Z$
  (see \cite[Propo\-si\-tion~3.13]{KK}). Our results provide an
  explicit proof of this conclusion, using~\cite{KK}. Indeed, let $X =
  J_p(\omega_1)$, and define \mbox{$Y = \bigl(\bigoplus_{\alpha\in
      L}J_p(\alpha)\bigr)_{\ell_p}$}, where~$L$ denotes the set of
  countably infinite limit ordinals and~$J_p(\alpha)$ is the closed
  subspace of~$J_p(\omega_1)$ spanned by the indicator functions of
  the ordinal intervals~$[0,\beta]$ for $\beta<\alpha$. Then
  \cite[Propo\-si\-tion~3.3, Lemma~3.4 and Theorem~3.7]{KK} show that
  $X\cong X\oplus Y$, that~$Y$ is isomorphic to the $\ell_p$-direct
  sum of countably many copies of itself, and that the
  ideal~$\mathscr{G}_Y(X)$ has codimension one in~$\mathscr{B}(X)$, so
  that Lemma~\ref{lemma29Nov2013} and Theorem~\ref{thm26Nov2013}
  apply.
\end{example} 

Our final applications of Theorem~\ref{thm26Nov2013} rely on the
following general observation.
\begin{lemma}\label{cor4Dec2013} 
  Suppose that $X = W\oplus Y$, where~$W$ and~$Y$ are Banach spaces
  such that:
  \begin{enumerate}
  \item\label{cor4Dec2013i} $Y$ is isomorphic to the $\ell_p$- or
    $c_0$-direct sum of countably many copies of itself for some
    $p\in[1,\infty);$ and
  \item\label{cor4Dec2013ii} the ideal $\mathscr{G}_Y(W)$ has
    codimension one in~$\mathscr{B}(W)$.
  \end{enumerate}
  Then~$X$ and~$Y$ satisfy
  conditions~\romanref{thm26Nov2013ii}--\romanref{thm26Nov2013i} of
  Theorem~{\normalfont{\ref{thm26Nov2013}}}, and hence
  $K_0(\mathscr{B}(X))\cong\Z$.
\end{lemma}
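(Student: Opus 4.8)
The plan is to verify conditions~\romanref{thm26Nov2013ii}--\romanref{thm26Nov2013i} of Theorem~\ref{thm26Nov2013} for the pair $(X,Y)$ and then apply the theorem. I would obtain conditions~\romanref{thm26Nov2013ii} and~\romanref{thm26Nov2013iii} from Lemma~\ref{lemma29Nov2013}, whose only non-trivial hypothesis to check is that $X\cong X\oplus Y$. This follows at once from condition~\alphref{cor4Dec2013i}: it gives $Y\cong Y\oplus Y$, and therefore $X = W\oplus Y\cong W\oplus Y\oplus Y = X\oplus Y$. Since condition~\alphref{cor4Dec2013i} is precisely condition~\alphref{lemma29Nov2013i} of Lemma~\ref{lemma29Nov2013}, that lemma then yields conditions~\romanref{thm26Nov2013ii} and~\romanref{thm26Nov2013iii}.

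The main obstacle is condition~\romanref{thm26Nov2013i}, namely that $\mathscr{G}_Y(X)$ has codimension one in $\mathscr{B}(X)$, and here I would exploit the block structure of $\mathscr{B}(X)$ induced by $X = W\oplus Y$. Let $J_W\colon W\to X$, $J_Y\colon Y\to X$ be the canonical inclusions and $Q_W\colon X\to W$, $Q_Y\colon X\to Y$ the canonical projections, so that $Q_WJ_W = I_W$, $Q_YJ_Y = I_Y$, $Q_WJ_Y = 0 = Q_YJ_W$ and $J_WQ_W + J_YQ_Y = I_X$. Consider the linear compression map $\rho\colon\mathscr{B}(X)\to\mathscr{B}(W)$, $\rho(T) = Q_WTJ_W$; it is surjective because $\rho(J_WAQ_W) = A$ for every $A\in\mathscr{B}(W)$. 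The heart of the argument will be the identity
\[ \mathscr{G}_Y(X) = \rho^{-1}\bigl(\mathscr{G}_Y(W)\bigr) = \bigl\{ T\in\mathscr{B}(X) : Q_WTJ_W\in\mathscr{G}_Y(W)\bigr\}. \]

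To prove this identity I would first note that, by the remark following the definition of $\mathscr{G}_Y$, condition~\alphref{cor4Dec2013i} makes both $\mathscr{G}_Y(W)$ and $\mathscr{G}_Y(X)$ equal to the set of operators that factor through~$Y$. The inclusion ``$\subseteq$'' is then immediate: if $T = T_2T_1$ factors through~$Y$, so does $\rho(T) = (Q_WT_2)(T_1J_W)$. For the reverse inclusion I would expand an arbitrary $T$ as $T = (J_WQ_W+J_YQ_Y)\,T\,(J_WQ_W+J_YQ_Y)$ into four summands; the three containing an outer factor $J_Y$ or $Q_Y$ plainly factor through~$Y$, while the remaining summand $J_W(Q_WTJ_W)Q_W$ factors through~$Y$ as soon as $Q_WTJ_W\in\mathscr{G}_Y(W)$, and a sum of operators factoring through~$Y$ again factors through~$Y$ because $Y\cong Y\oplus Y$. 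Granting the identity, $\rho$ induces a linear isomorphism between $\mathscr{B}(X)/\mathscr{G}_Y(X)$ and $\mathscr{B}(W)/\mathscr{G}_Y(W)$; the latter is one-dimensional by condition~\alphref{cor4Dec2013ii}, hence so is the former, which is condition~\romanref{thm26Nov2013i}. With all three conditions in hand, Theorem~\ref{thm26Nov2013} gives $K_0(\mathscr{B}(X))\cong\Z$.
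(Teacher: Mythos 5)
Your proposal is correct and takes essentially the same route as the paper: conditions~\romanref{thm26Nov2013ii} and~\romanref{thm26Nov2013iii} are obtained from Lemma~\ref{lemma29Nov2013} via $X\cong X\oplus Y$, and condition~\romanref{thm26Nov2013i} is verified through the $(2\times 2)$ block decomposition of operators on $W\oplus Y$, in which the three blocks touching the $Y$-summand trivially factor through~$Y$, so that membership of $\mathscr{G}_Y(X)$ is governed solely by the $W\to W$ corner. Your identity $\mathscr{G}_Y(X)=\rho^{-1}\bigl(\mathscr{G}_Y(W)\bigr)$ and the induced isomorphism of quotients simply spell out in detail what the paper states more tersely.
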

\begin{proof}
  Condition~\alphref{cor4Dec2013i} ensures that $Y\cong Y\oplus Y$,
  so that $X\cong X\oplus Y$, and Lemma~\ref{lemma29Nov2013} therefore
  shows that
  conditions~\romanref{thm26Nov2013ii}--\romanref{thm26Nov2013iii} of
  Theorem~{\normalfont{\ref{thm26Nov2013}}} are satisfied.

To verify condition~\romanref{thm26Nov2013i}, we use the fact that
each operator $T\in\mathscr{B}(X)$ can be represented as a $(2\times
2)$-matrix
\[ T = \begin{pmatrix} T_{1,1}\colon W\to W & T_{1,2}\colon Y\to W\\
  T_{2,1}\colon W\to Y & T_{2,2}\colon Y\to Y\end{pmatrix}, \] and $T$
factors through~$Y$ if and only if~$T_{j,k}$ does for each pair
$j,k\in\{1,2\}$. Since $T_{j,k}$ trivially factors through~$Y$ for
$(j,k)\ne(1,1)$, condition~\alphref{cor4Dec2013ii} shows
that~$\mathscr{G}_Y(X)$ has codimension one in~$\mathscr{B}(X)$.
\end{proof}

\begin{example}\label{argyroshaydonjohnson}
  Let $W = X_{\text{AH}}$ be Argyros and Haydon's Banach space which
  solves the scalar-plus-compact problem (see~\cite{ah}), and, for
  some $p\in[1,\infty]$, let $Y = C_p$ be Johnson's $p^{\text{th}}$
  universal space with the property that all approximable operators
  factor through~$C_p$ (see~\cite{joh}). Then, as noted
  in~\cite[p.~341]{joh}, $Y$~is (iso\-met\-ri\-cally) isomorphic to
  either the $\ell_p$-direct sum (for $p<\infty$) or the $c_0$-direct
  sum (for $p=\infty$) of countably many copies of itself.

  Moreover, every compact operator~$T$ on~$W$ is approximable
  because~$W$ has a Schauder basis, and therefore~$T$ factors
  through~$Y$ by the fundamental property of~$Y$ (see
  \cite[Theorem~1]{joh}).  Hence we have
  $\mathscr{K}(W)\subseteq\mathscr{G}_Y(W)$.  To show that these two
  ideals are equal, we assume the contrary. Then, as $\mathscr{K}(W)$
  has codimension one in~ $\mathscr{B}(W)$, necessarily
  $I_W\in\mathscr{G}_Y(W)$, so that Lemma~\ref{STSTlemma} implies
  that~$Y$ contains a complemented subspace which is isomorphic
  to~$W$. However, as observed in~\cite[p.~341]{joh}, every closed,
  infinite-dimensional subspace of~$Y$ contains a subspace which is
  iso\-mor\-phic to~$\ell_p$ (for $p<\infty$) or~$c_0$ (for
  $p=\infty$), but no subspace of~$W$ is isomorphic to~$\ell_p$
  or~$c_0$ because~$W$ is hereditarily in\-de\-composable by
  \cite[Theorem~8.11]{ah}.  This contradiction proves that
  $\mathscr{G}_Y(W) = \mathscr{K}(W)$.  In particular
  $\mathscr{G}_Y(W)$ has codimension one in~$\mathscr{B}(W)$, so that
  Lemma~\ref{cor4Dec2013} implies that
  $K_0(\mathscr{B}(X_{\text{AH}}\oplus C_p))\cong\Z$.
\end{example}

\begin{example}\label{ShelahSteprans}
  Let~$W$ be the non-separable Banach space constructed by Shelah
  and Ste\-pr\={a}ns~\cite{ss} such that the ideal~$\mathscr{X}(W)$ of
  operators with separable range has codimension one
  in~$\mathscr{B}(W)$, and choose a family
  $(Y_\gamma)_{\gamma\in\Gamma}$ of closed, separable subspaces of~$W$
  such that:
  \begin{enumerate}
  \item\label{ShelahSteprans1} every closed, separable subspace of~$W$
    is isomorphic to~$Y_\gamma$ for some $\gamma\in\Gamma$; and
  \item\label{ShelahSteprans2} every subspace $Y_\beta$ is repeated
    countably many times in the family $(Y_\gamma)_{\gamma\in\Gamma}$,
    in the sense that the set $\{\gamma\in\Gamma : Y_\gamma =
    Y_\beta\}$ is countably infinite for each $\beta\in\Gamma$.
  \end{enumerate}
  Set $Y = \bigl(\bigoplus_{\gamma\in\Gamma}Y_\gamma\bigr)_{\ell_p}$
  for some $p\in(1,\infty)$. Condition~\alphref{ShelahSteprans2}
  ensures that~$Y$ is isomorphic to the $\ell_p$-direct sum of
  countably many copies of itself.

  We shall now proceed to show that $\mathscr{X}(W) =
  \mathscr{G}_Y(W)$. Indeed, for each $T\in\mathscr{X}(W)$, we can
  choose $\gamma\in\Gamma$ such that there is an isomorphism~$U$
  of~$\overline{T[W]}$ onto~$Y_\gamma$. Let $\iota_\gamma\colon
  Y_\gamma\to Y$ and $\pi_\gamma\colon Y\to Y_\gamma$ denote the
  canonical $\gamma^{\text{th}}$ coordinate embedding and projection,
  respectively. Then we have $T = SR$, where the operators~$R$ and~$S$
  given by $R\colon w\mapsto \iota_\gamma UTw,\, W\to Y,$ and $S\colon
  y\mapsto U^{-1}\pi_\gamma y,\, Y\to W$.  This shows that
  $T\in\mathscr{G}_Y(W)$, and therefore the inclusion
  $\mathscr{X}(W)\subseteq\mathscr{G}_Y(W)$ holds.

  On the other hand, the Banach space~$Y$ is weakly compactly
  generated, so that the same is true for each of its complemented
  subspaces.  Wark \cite[Proposition~2]{wark} has shown that~$W$ is
  not weakly compactly generated. Hence no complemented subspace
  of~$Y$ is isomorphic to~$W$, so that $I_W\notin\mathscr{G}_Y(W)$ by
  Lemma~\ref{STSTlemma}. Thus we conclude that $\mathscr{G}_Y(W) =
  \mathscr{X}(W)$, and therefore Lemma~\ref{cor4Dec2013} shows that
  $K_0(\mathscr{B}(W\oplus Y))\cong\Z$.
\end{example} 

\begin{example}\label{example29Jan2014}
  Assume either the Continuum Hypothesis or Martin's Axiom together
  with the negation of the Continuum Hypothesis, and let $W = C(K)$,
  where~$K$ is the scattered compact Hausdoff space described in
  Example~\ref{mrowkaexample}. Suppose that~$Y$ is a Banach space such
  that:
  \begin{enumerate}
  \item\label{example29Jan2014i} $Y$ is isomorphic to the $\ell_p$- or
    $c_0$-direct sum of countably many copies of itself for some
    $p\in[1,\infty);$ 
  \item\label{example29Jan2014ii} $Y$ contains a complemented subspace
    which is isomorphic to~$c_0$; and
  \item\label{example29Jan2014iii} no complemented subspace of~$Y$ is
    isomorphic to~$W$.
  \end{enumerate}
  Condition~\alphref{example29Jan2014iii} ensures that the
  ideal~$\mathscr{G}_Y(W)$ is proper, while
  condition~\alphref{example29Jan2014ii} implies that it contains the
  ideal~$\mathscr{G}_{c_0}(W)$, which has codimension one
  in~$\mathscr{B}(W)$. Hence~$\mathscr{G}_Y(W) =
  \mathscr{G}_{c_0}(W)$, so that~$\mathscr{G}_Y(W)$ has codimension
  one in~$\mathscr{B}(W)$.  The conditions of Lemma~\ref{cor4Dec2013}
  are therefore satisfied, and thus $K_0(\mathscr{B}(W\oplus
  Y))\cong\Z$.

  For instance, conditions
  \alphref{example29Jan2014i}--\alphref{example29Jan2014ii}, above,
  are satisfied for $Y = C(M)$, where~$M$ is any infinite, compact
  metric space.
\end{example}
\appendix
\section{An alternative approach for
  $\mathscr{B}(C([0,\omega_1]))$}\label{section5}
\noindent
Edelstein and Mityagin stated in \cite[Proposition~4]{em} that the
invertible group of the Banach
algebra~$\mathscr{B}(C([0,\omega_1])^n)$ is homotopy equi\-va\-lent to
the invertible group of scalar-valued $(n\times n)$-matrices for each
$n\in\N$. The aim of this appendix is to show how this result can be
applied to reprove the con\-clu\-sion of
Example~\ref{applicationsofmainthmComega1} that
$K_0(\mathscr{B}(C([0,\omega_1])))\cong\Z$, and also to deduce from it
that the $K_1$-group of~$\mathscr{B}(C([0,\omega_1]))$ vanishes. Note
that this approach works for complex scalars only; we shall therefore
suppose that the scalar field is~$\C$ throughout this appendix.

The $K_1$-group will be the main object of interest, so we shall begin
by defining it formally. In contrast to the purely ring-theoretic
definition of~$K_0$, topology plays a key role here. Suppose
that~$\mathscr{A}$ is a complex, unital Banach algebra.  For each
$n\in\N$, we turn~$M_n(\mathscr{A})$ into a Banach algebra by
identifying it with its natural image in the Banach algebra
$\mathscr{B}(\mathscr{A}^n)$ of operators acting on the direct sum of
$n$ copies of~$\mathscr{A}$, where~$\mathscr{A}^n$ is equipped with
the norm $\bigl\|(A_1,\ldots,A_n)\bigr\| =
\max\bigl\{\|A_1\|,\ldots,\|A_n\|\bigr\}$, as in
Section~\ref{section2}.\smallskip

\noindent
\textsl{Note.} For a Banach space~$E$, we have now equipped
$M_n(\mathscr{B}(E))$ with two potentially different norms, one coming
from its identification with $\mathscr{B}(E^n)$, the other arising
from its embedding into $\mathscr{B}(\mathscr{B}(E)^n)$. Fortunately,
these two norms are equal, as is easily checked. \smallskip

Let $\inv_n(\mathscr{A})$ be the group of invertible elements of
$M_n(\mathscr{A})$, and denote by~$1_{M_n(\mathscr{A})}$ the $(n\times
n)$-identity matrix over~$\mathscr{A}$.  Given
$U\in\inv_m(\mathscr{A})$ and $V\in\inv_n(\mathscr{A})$, where
$m,n\in\N$, we say that~$U$ and~$V$ are \emph{$K_1$-equivalent},
written $U\sim_1 V$, if, for some integer $k \geq \max\{m,n\}$, there
exists a con\-tinuous path $t\mapsto
W_t,\,[0,1]\to\inv_k(\mathscr{A}),$ such that
\begin{equation}\label{K1path} W_0 = \begin{pmatrix} U & 0 \\ 0 &
    1_{M_{k-m}(\mathscr{A})} \end{pmatrix} \qquad\text{and}\qquad W_1
  = \begin{pmatrix} V & 0 \\ 0 &
    1_{M_{k-n}(\mathscr{A})} \end{pmatrix}. \end{equation} This
defines an equivalence relation $\sim_1$ on the set
$\inv_{\infty}(\mathscr{A}) = \bigcup_{n\in\N}\inv_n(\mathscr{A})$,
and the quotient $K_1(\mathscr{A}) =
\inv_{\infty}(\mathscr{A})/\mathord{\sim_1}$ is an abelian group with
respect to the operation
\[ \bigl([U]_1,[V]_1\bigr)\mapsto \left[\begin{pmatrix} U & 0 \\ 0 &
    V \end{pmatrix}\right]_1,\quad K_1(\mathscr{A})\times
K_1(\mathscr{A})\to K_1(\mathscr{A}), \] where $[U]_1$ denotes the
equivalence class of $U\in\inv_{\infty}(\mathscr{A})$ in
$K_1(\mathscr{A})$.

A cornerstone of $K$-theory for complex Banach algebras is that
\emph{Bott periodicity} holds:
\begin{equation}\label{bott} K_0(\mathscr{A})\cong 
  K_1(\widetilde{S\mathscr{A}}), \end{equation} 
where $\widetilde{S\mathscr{A}}$ denotes the \emph{unitization} of the
\emph{suspension} of~$\mathscr{A}$, that is,
\[ \widetilde{S\mathscr{A}} = \{ f\in C([0,1],\mathscr{A}) : f(0) =
f(1)\in\C 1_{\mathscr{A}}\}; \] here $C([0,1],\mathscr{A})$ denotes
the Banach algebra of continuous, $\mathscr{A}$-valued functions
defined on the unit interval~$[0,1]$, and $1_{\mathscr{A}}$ is the
multiplicative identity of~$\mathscr{A}$. We may identify
$M_n(C([0,1],\mathscr{A}))$ with $C([0,1],M_n(\mathscr{A}))$ for each
$n\in\N$; under this identification, we have
\begin{equation}\label{eqinvnSAtilde}
  \inv_n\widetilde{S\mathscr{A}} = 
  \{ f\in C([0,1],\inv_n\mathscr{A}) : f(0) = f(1)\in M_n(\C
  1_{\mathscr{A}})\}. \end{equation}
 
Let~$\mathscr{A}$ and~$\mathscr{C}$ be complex, unital Banach
algebras, and let $\phi\colon\mathscr{A}\to\mathscr{C}$ be a bounded,
unital algebra homomorphism.  We shall require the following two
ho\-mo\-mor\-phisms associated with~$\phi$.  First, in analogy
with~\eqref{K0functor}, we can define a group homomorphism
\mbox{$K_1(\phi)\colon K_1(\mathscr{A})\to K_1(\mathscr{C})$} by
\begin{equation}\label{eq3Jan:defnK1functor} 
  K_1(\phi)([U]_1) = [\phi_n(U)]_1\qquad (n\in\N,\,
  U\in\inv_n\mathscr{A}), \end{equation}
where $\phi_n$ is given by~\eqref{defnphin}, and secondly, 
we obtain a bounded,
unital algebra homomorphism $\widetilde{S\phi}\colon 
\widetilde{S\mathscr{A}}\to\widetilde{S\mathscr{C}}$ 
by the definition $\widetilde{S\phi}(f) = \phi\circ f$.

The following result is probably well known to experts, but since we
have been unable to locate a precise reference to it, we include a
proof.
\begin{lemma}\label{LemmaA1} Let~$\mathscr{A}$ and~$\mathscr{C}$ be  
  complex, unital Banach algebras, let $n\in\N,$ and let
  \mbox{$\phi\colon\mathscr{A}\to\mathscr{C}$} and
  $\psi\colon\mathscr{C}\to\mathscr{A}$ be bounded, unital algebra
  homomorphisms such that the restriction to~$\inv_n\mathscr{A}$ of
  the mapping $\psi_n\circ\phi_n$ is homotopy equivalent to the
  identity mapping, in the sense that there exists a continuous
  mapping $F\colon [0,1]\times\inv_n\mathscr{A}\to\inv_n\mathscr{A}$
  such that $F(0,U) = U$ and $F(1,U) = \psi_n\circ\phi_n(U)$ for each
  $U\in\inv_n\mathscr{A}$. Then
  \[ K_1(\widetilde{S\psi})\circ K_1(\widetilde{S\phi})([f]_1) =
  [f]_1\qquad (f\in\inv_n\widetilde{S\mathscr{A}}). \]
\end{lemma}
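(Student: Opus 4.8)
The plan is to reduce the identity $K_1(\widetilde{S\psi})\circ K_1(\widetilde{S\phi})([f]_1)=[f]_1$ to the construction of a single continuous path in $\inv_n\widetilde{S\mathscr{A}}$, and then to manufacture that path out of the given homotopy~$F$ after correcting it so that it respects the endpoint conditions built into the suspension. First I would unwind the left-hand side by functoriality: by~\eqref{eq3Jan:defnK1functor} we have $K_1(\widetilde{S\psi})\circ K_1(\widetilde{S\phi})([f]_1)=[g]_1$, where $g=(\widetilde{S\psi})_n\circ(\widetilde{S\phi})_n(f)$. Under the identification of $M_n(\widetilde{S\mathscr{A}})$ with paths into $M_n(\mathscr{A})$ recorded in~\eqref{eqinvnSAtilde}, entrywise application of $\widetilde{S\phi}$ and~$\widetilde{S\psi}$ corresponds to pointwise application of $\phi_n$ and~$\psi_n$, so that $g(t)=\psi_n\circ\phi_n(f(t))$ for each $t\in[0,1]$. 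The goal thus becomes to show $[f]_1=[g]_1$; and since both $f$ and~$g$ lie in $\inv_n\widetilde{S\mathscr{A}}$, I may take $k=n$ in the definition of~$\sim_1$, so that no padding by identity blocks is needed and it suffices to exhibit a continuous path $s\mapsto G_s$ in $\inv_n\widetilde{S\mathscr{A}}$ with $G_0=f$ and $G_1=g$.

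The natural candidate $G_s\colon t\mapsto F(s,f(t))$ is continuous in $(s,t)$, takes invertible values, and, because $f(0)=f(1)$, even satisfies $G_s(0)=G_s(1)$; however, this common value $F(s,f(0))$ need not be a scalar matrix for intermediate~$s$, so $G_s$ need not belong to $\inv_n\widetilde{S\mathscr{A}}$. This is the one genuine obstacle, and I would resolve it by a multiplicative correction. Writing $\Lambda=f(0)=f(1)\in M_n(\C 1_{\mathscr{A}})$, which is an invertible scalar matrix since $f\in\inv_n\widetilde{S\mathscr{A}}$, I would instead define
\[ G_s\colon t\mapsto F(s,f(t))\,\bigl[F(s,f(0))\bigr]^{-1}\,\Lambda. \]
Then $G_s(0)=G_s(1)=\Lambda\in M_n(\C 1_{\mathscr{A}})$ for every~$s$, while $G_s(t)$ is a product of invertible elements, so each $G_s$ genuinely lies in $\inv_n\widetilde{S\mathscr{A}}$ by~\eqref{eqinvnSAtilde}. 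Joint continuity of $(s,t)\mapsto G_s(t)$ on the compact square $[0,1]^2$ follows from continuity of~$F$, of~$f$, and of inversion in $\inv_n\mathscr{A}$, and hence yields continuity of $s\mapsto G_s$ into the path space $\inv_n\widetilde{S\mathscr{A}}$.

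It then remains only to identify the two endpoints. At $s=0$ we have $F(0,\cdot)=\mathrm{id}$, whence $G_0(t)=f(t)\,\Lambda^{-1}\,\Lambda=f(t)$, so $G_0=f$. At $s=1$, unitality of~$\phi$ and~$\psi$ forces $\psi_n\circ\phi_n$ to fix the scalar matrix~$\Lambda$ entrywise, so that $F(1,f(0))=\psi_n\circ\phi_n(\Lambda)=\Lambda$ and therefore $G_1(t)=\psi_n\circ\phi_n(f(t))\,\Lambda^{-1}\,\Lambda=g(t)$, so $G_1=g$. This exhibits $f\sim_1 g$ and hence $[f]_1=[g]_1$, as required. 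The only points needing care are that $\Lambda$ really is an invertible scalar matrix (so the correction term is well defined and returns the endpoints to $M_n(\C 1_{\mathscr{A}})$) and that $\psi_n\circ\phi_n$ fixes $M_n(\C 1_{\mathscr{A}})$ pointwise; both are immediate from $f\in\inv_n\widetilde{S\mathscr{A}}$ and the unitality hypothesis on~$\phi$ and~$\psi$.
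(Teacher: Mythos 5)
Your proof is correct, and it streamlines the paper's argument in a genuine way. The paper's proof rests on the same core idea --- correct the homotopy $F$ multiplicatively so that the resulting paths stay inside $\inv_n\widetilde{S\mathscr{A}}$ --- but its correction is a left multiplication without the factor $\Lambda$: it sets $g_t(r) = F(t,f(0))^{-1}F(t,f(r))$, which pins the endpoint values at $1_{M_n(\mathscr{A})}$ rather than at $\Lambda = f(0)$. Consequently that homotopy connects $f(0)^{-1}\cdot f$ to $f(0)^{-1}\cdot(\psi_n\circ\phi_n\circ f)$ instead of $f$ to $\psi_n\circ\phi_n\circ f$, and a second step is needed to strip off the normalizing factor: since $\inv_n(\C 1_{\mathscr{A}})$ is homeomorphic to $\inv_n\C$ and hence path-connected, the paper chooses a continuous path $t\mapsto V_t$ in $\inv_n(\C 1_{\mathscr{A}})$ from $1_{M_n(\mathscr{A})}$ to $f(0)^{-1}$ and uses $t\mapsto V_t\cdot f$ and $t\mapsto V_t\cdot(\psi_n\circ\phi_n\circ f)$ to conclude. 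Your right-handed correction $G_s(t) = F(s,f(t))\bigl[F(s,f(0))\bigr]^{-1}\Lambda$ keeps the basepoint equal to $\Lambda$ for every $s$, so the homotopy runs directly from $f$ to $g$ and the connectedness step disappears entirely. Besides brevity, this buys something concrete: the path-connectedness of $\inv_n\C$ is the only point in the paper's proof of this particular lemma where complex scalars are genuinely used (the analogous statement fails for real scalars, where the invertible matrices form two components), so your argument would carry over verbatim to a real-scalar analogue of the suspension, whereas the paper's would not --- though of course the surrounding appendix still needs $\C$ for Bott periodicity. The remaining ingredients are shared by both proofs: the identification \eqref{eqinvnSAtilde}, the observation that $\psi_n\circ\phi_n$ fixes $M_n(\C 1_{\mathscr{A}})$ pointwise by unitality, and the passage from joint (hence uniform) continuity on the compact square $[0,1]^2$ to continuity of $s\mapsto G_s$ as a map into $C([0,1],M_n(\mathscr{A}))$, which you invoke correctly and the paper spells out.
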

\begin{proof} Given $f\in\inv_n\widetilde{S\mathscr{A}}$, we define
  $g_t(r) = F(t,f(0))^{-1}F(t,f(r))\in\inv_n\mathscr{A}$ for each pair
  $r,t\in[0,1]$, where $F$ is chosen as above. An easy check
  using~\eqref{eqinvnSAtilde} shows that
  $g_t\in\inv_n\widetilde{S\mathscr{A}}$ for each $t\in[0,1]$.
  Moreover, the mapping $(r,t)\mapsto g_t(r),\,[0,1]^2
  \to\inv_n\mathscr{A},$ is continuous, and it is therefore uniformly
  continuous, so that, for each $\epsilon>0$, there exists $\delta>0$
  such that $\|g_t(r) - g_{t'}(r')\|_{M_n(\mathscr{A})}\leq\epsilon$
  whenever $r,r',t,t'\in[0,1]$ satisfy
  \mbox{$\max\{|r-r'|,|t-t'|\}\leq\delta$}.  This implies that
  \[ \| g_t-g_{t'}\|_{C([0,1],M_n(\mathscr{A}))} =
  \sup_{r\in[0,1]}\|g_t(r) -
  g_{t'}(r)\|_{M_n(\mathscr{A})}\le\epsilon\qquad
  (t,t'\in[0,1],\,|t-t'|\leq\delta), \] which shows that the mapping
  $t\mapsto g_t,\, [0,1] \to\inv_n\widetilde{S\mathscr{A}},$ is
  continuous. Hence we have
  \begin{equation}\label{Nov2013eqn1} [f(0)^{-1}\cdot f]_1
    =[f(0)^{-1}\cdot(\psi_n\circ\phi_n\circ f)]_1\quad\text{in}\quad
    K_1(\widetilde{S\mathscr{A}})
  \end{equation} because $g_0(r) =
  f(0)^{-1}f(r)$ and \[ g_1(r) =
  (\psi_n\circ\phi_n)(f(0))^{-1}(\psi_n\circ\phi_n)(f(r)) =
  f(0)^{-1}(\psi_n\circ\phi_n\circ f)(r)\qquad (r\in [0,1]), \] 
  where we have used the fact that $\psi_n\circ\phi_n(U) = U$ 
  for each $U\in M_n(\C 1_{\mathscr{A}})$.

  Since $\inv_n(\C 1_{\mathscr{A}})$ is homeomorphic to~$\inv_n\C$, it
  is path-connected. We can therefore choose a continuous mapping
  $t\mapsto V_t,\, [0,1]\to \inv_n(\C 1_{\mathscr{A}}),$ such that
  $V_0 = 1_{M_n(\mathscr{A})}$ and $V_1 = f(0)^{-1}$. This implies
  that the mappings $t\mapsto V_t\cdot f$ and $t\mapsto
  V_t\cdot(\psi_n\circ\phi_n\circ f)$ of~$[0,1]$
  into~$\inv_n\widetilde{S\mathscr{A}}$ are continuous. They connect
  $f$ with $f(0)^{-1}\cdot f$ and $\psi_n\circ\phi_n\circ f$ with
  $f(0)^{-1}\cdot(\psi_n\circ\phi_n\circ f)$, respectively.  When
  combined with~\eqref{Nov2013eqn1}, this shows that
  \[ [f]_1 = [f(0)^{-1}\cdot f]_1
  =[f(0)^{-1}\cdot(\psi_n\circ\phi_n\circ f)]_1 =
  [\psi_n\circ\phi_n\circ f]_1 = K_1(\widetilde{S\psi})\circ
  K_1(\widetilde{S\phi})([f]_1), \] as required.
\end{proof}

\begin{corollary}\label{corK0andK1}
  Let~$\mathscr{A}$ and~$\mathscr{C}$ be complex, unital Banach
  algebras, and suppose that there exist bounded, unital algebra
  homomorphisms $\phi\colon\mathscr{A}\to\mathscr{C}$ and
  $\psi\colon\mathscr{C}\to\mathscr{A}$ such that the restrictions
  $\phi_n\colon\inv_n\mathscr{A}\to\inv_n\mathscr{C}$ and
  $\psi_n\colon\inv_n\mathscr{C}\to\inv_n\mathscr{A}$ induce a
  homotopy equivalence for each $n\in\N$, in the sense that
  $\psi_n\circ\phi_n$ is homotopy equivalent to the identity mapping
  on~$\inv_n\mathscr{A}$ and $\phi_n\circ\psi_n$ is homotopy
  equivalent to the identity mapping on~$\inv_n\mathscr{C}$. Then
  \begin{equation}\label{eqK0andK1}
    K_0(\mathscr{A})\cong K_0(\mathscr{C})\qquad \text{and}\qquad 
    K_1(\mathscr{A})\cong K_1(\mathscr{C}). \end{equation}
\end{corollary}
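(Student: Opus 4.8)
The plan is to produce mutually inverse group isomorphisms at the level of both $K_1$ and $K_0$, treating the two assertions of~\eqref{eqK0andK1} separately. The $K_1$-statement will follow directly from the hypothesis together with the functoriality of~$K_1$, whereas the $K_0$-statement will be obtained by passing to the suspension via Bott periodicity~\eqref{bott} and invoking Lemma~\ref{LemmaA1}; the latter is where all the analytic content resides.

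For the isomorphism $K_1(\mathscr{A})\cong K_1(\mathscr{C})$, I would show that $K_1(\phi)$ and $K_1(\psi)$ are mutually inverse. Given a class $[U]_1\in K_1(\mathscr{A})$ with $U\in\inv_n\mathscr{A}$, the formula~\eqref{eq3Jan:defnK1functor} gives $K_1(\psi)\circ K_1(\phi)([U]_1) = [\psi_n\circ\phi_n(U)]_1$. The homotopy $F\colon[0,1]\times\inv_n\mathscr{A}\to\inv_n\mathscr{A}$ supplied by the hypothesis furnishes the continuous path $t\mapsto F(t,U)$ inside $\inv_n\mathscr{A}$ running from $F(0,U)=U$ to $F(1,U)=\psi_n\circ\phi_n(U)$; taking $k=n$ in~\eqref{K1path}, this path witnesses $U\sim_1\psi_n\circ\phi_n(U)$, so the two classes agree and $K_1(\psi)\circ K_1(\phi)$ is the identity on $K_1(\mathscr{A})$. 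The symmetric argument, using the homotopy between $\phi_n\circ\psi_n$ and the identity on~$\inv_n\mathscr{C}$, shows that $K_1(\phi)\circ K_1(\psi)$ is the identity on $K_1(\mathscr{C})$, whence $K_1(\phi)$ is the desired isomorphism.

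For the isomorphism $K_0(\mathscr{A})\cong K_0(\mathscr{C})$, I would first invoke Bott periodicity~\eqref{bott} to reduce the claim to showing $K_1(\widetilde{S\mathscr{A}})\cong K_1(\widetilde{S\mathscr{C}})$, and then exhibit $K_1(\widetilde{S\phi})$ and $K_1(\widetilde{S\psi})$ as mutual inverses. Since every element of $K_1(\widetilde{S\mathscr{A}})$ is represented by some $f\in\inv_n\widetilde{S\mathscr{A}}$, and since the homotopy hypothesis of the corollary is precisely the hypothesis of Lemma~\ref{LemmaA1}, that lemma yields $K_1(\widetilde{S\psi})\circ K_1(\widetilde{S\phi})([f]_1)=[f]_1$ for every such~$f$; thus $K_1(\widetilde{S\psi})\circ K_1(\widetilde{S\phi})$ is the identity on $K_1(\widetilde{S\mathscr{A}})$. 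Applying Lemma~\ref{LemmaA1} again with the roles of $\phi$ and~$\psi$ interchanged gives that $K_1(\widetilde{S\phi})\circ K_1(\widetilde{S\psi})$ is the identity on $K_1(\widetilde{S\mathscr{C}})$. Combining these with~\eqref{bott} yields $K_0(\mathscr{A})\cong K_1(\widetilde{S\mathscr{A}})\cong K_1(\widetilde{S\mathscr{C}})\cong K_0(\mathscr{C})$.

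I do not expect a genuine obstacle, precisely because Lemma~\ref{LemmaA1} does the heavy lifting. The only points requiring care are that the path $t\mapsto F(t,U)$ really realizes the relation~$\sim_1$ — one must note that it remains in $\inv_n\mathscr{A}$ and take $k=n$, so that no padding by identity blocks is needed — and that the two compositions are the identity on the \emph{whole} of each $K_1$-group; the latter is immediate because each class is represented at some finite matrix level~$n$ and the argument above applies uniformly in~$n$. One should also record that $\widetilde{S\phi}$ and $\widetilde{S\psi}$ are bounded, unital algebra homomorphisms, as noted just before Lemma~\ref{LemmaA1}, so that the functoriality formula~\eqref{eq3Jan:defnK1functor} may legitimately be applied to them.
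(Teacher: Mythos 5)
Your proposal is correct and follows essentially the same route as the paper's own proof: the $K_1$-isomorphism is read off directly from the definitions~\eqref{K1path} and~\eqref{eq3Jan:defnK1functor} together with the given homotopies, and the $K_0$-isomorphism is obtained by applying Lemma~\ref{LemmaA1} twice (once with $\phi$ and $\psi$ interchanged) and then Bott periodicity~\eqref{bott} twice. The only difference is cosmetic: you make explicit the path $t\mapsto F(t,U)$ witnessing $U\sim_1\psi_n\circ\phi_n(U)$ with $k=n$, which the paper leaves implicit.
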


\begin{proof}
  Using the assumptions in tandem with Lemma~\ref{LemmaA1}, we see
  that $K_1(\widetilde{S\phi})$ is an isomorphism
  of~$K_1(\widetilde{S\mathscr{A}})$
  onto~$K_1(\widetilde{S\mathscr{C}})$ with
  inverse~$K_1(\widetilde{S\psi})$. Hence we have
  \[ K_0(\mathscr{A})\cong K_1(\widetilde{S\mathscr{A}})\cong
  K_1(\widetilde{S\mathscr{C}})\cong K_0(\mathscr{C}) \] by two
  applications of Bott periodicity~\eqref{bott}. This establishes the
  first part of~\eqref{eqK0andK1}.
 
  The second part is much simpler. Indeed, working straight from the
  definitions~\eqref{K1path} and~\eqref{eq3Jan:defnK1functor}, we
  obtain
  \[ K_1(\psi)\circ K_1(\phi)([U]_1) = [\psi_n\circ\phi_n(U)]_1 =
  [U]_1\qquad (n\in\N,\, U\in\inv_n\mathscr{A}) \] because
  $\psi_n\circ\phi_n$ is homotopy equivalent to the identity mapping
  on~$\inv_n\mathscr{A}$.  A similar argument shows that
  $K_1(\phi)\circ K_1(\psi)$ is equal to the identity
  on~$K_1(\mathscr{C})$, and $K_1(\phi)$ is there\-fore an isomorphism
  of $K_1(\mathscr{A})$ onto~$K_1(\mathscr{C})$ with
  inverse~$K_1(\psi)$.
\end{proof}

Recall from Example~\ref{applicationsofmainthmComega1} that, for $Y =
\bigl(\bigoplus_{\alpha<\omega_1} C([0,\alpha])\bigr)_{c_0}$, the
ideal $\mathscr{G}_Y(C([0,\omega_1]))$ has codimension one in
$\mathscr{B}(C([0,\omega_1]))$, and let
$\phi\colon\mathscr{B}(C([0,\omega_1]))\to\C$ be the corresponding
algebra homomorphism given by~\eqref{eqDefnphi}.  Further, define
\[ \psi\colon\ \lambda\mapsto \lambda I_{C([0,\,\omega_1])},\quad
\C\to\mathscr{B}(C([0,\omega_1])). \] Then $\phi$ and $\psi$ are
bounded, unital algebra homomorphisms such that $\phi\circ\psi =
I_{\C}$, and hence $\phi_n\circ\psi_n = I_{M_n(\C)}$ for each
$n\in\N$. The precise statement of \cite[Proposition~4]{em} is that
$\psi_n\circ\phi_n$ is homotopy equivalent to the identity mapping
on~$\inv_n\mathscr{B}(C([0,\omega_1]))$ for each $n\in\N$, so that we
may apply Corollary~\ref{corK0andK1} to obtain
\[ K_0(\mathscr{B}(C([0,\omega_1])))\cong K_0(\C)\cong\Z\qquad
\text{and}\qquad K_1(\mathscr{B}(C([0,\omega_1])))\cong K_1(\C) =
\{0\}. \] This completes the proof of the statements made at the
beginning of this appendix.

A comparison between the above calculation of the $K_0$-group
of~$\mathscr{B}(C([0,\omega_1]))$ and the one given in
Example~\ref{applicationsofmainthmComega1} shows some obvious
advantages of the former, namely that it is shorter and simultaneously
leads to the determination of the $K_1$-group; however, it also has
some significant drawbacks:
\begin{itemize}
\item it does not apply to real scalars;
\item it relies on some very heavy machinery, notably Bott
  periodicity, but also Edelstein and Mityagin's highly non-trivial
  result;
\item it is entirely topological, despite the purely ring-theoretic
  nature of~$K_0$.
\end{itemize}

\section*{Acknowledgements} 
\noindent
Part of this work was carried out during a visit of the second author
to Lancaster in February 2012, supported by a London Mathematical
Society Scheme 2 grant (ref.~21101). The authors gratefully
acknowledge this support. The second author was also partially
supported by the Polish National Science Center research grant
2011/01/B/ST1/00657.

\bibliographystyle{amsplain}

\begin{thebibliography}{99}
\bibitem{ak} F.~Albiac and N.~J.~Kalton, \emph{Topics in Banach space
  theory,} Grad.\ Texts in Math.~233, Springer-Verlag, New
  York, 2006.

\bibitem{ah} S.~A.~Argyros and R.~G.~Haydon, A hereditarily
  indecomposable $\mathscr{L}_\infty$-space that solves the
  scalar-plus-compact problem, \emph{Acta Math.}~\textbf{206} (2011),
  1--54.

\bibitem{bl} B.~Blackadar, \emph{$K$-theory for operator algebras,}
  M.S.R.I.\ Publications~5, $2^{\text{nd}}$~ed., Cambridge
  University Press, Cambridge, 1998.

\bibitem{em} I.~S.~Edelstein and B.~S.~Mityagin, Homotopy type of
  linear groups of two classes of Banach spaces, \emph{Funct.\
    Anal.~Appl.}~\textbf{4} (1970), 221--231.

\bibitem{edgar} G.~A.~Edgar, A long James space, in: \emph{Measure
    Theory}, Oberwolfach 1979, Lecture Notes in Math.~794,
  Springer-Verlag (1980), 31--37.

\bibitem{james} R.~C.~James, Bases and reflexivity of Banach spaces,
  \emph{Ann.\ of Math.}~\textbf{52} (1950), 518--527.

\bibitem{joh} W.~B.~Johnson, Factoring compact operators, \emph{Israel
  J.~Math.}~\textbf{9} (1971), 337--345.

\bibitem{KK} T.~Kania and T.~Kochanek, The ideal of weakly compactly
  generated operators acting on a Banach space, \emph{J.~Operator
    Theory}, to appear; arXiv:1206.5424.

\bibitem{KKL} T.~Kania, P.~Koszmider and N.~J.~Laustsen, A
  weak$^*$-topological dichotomy with applications in operator theory, 
  \emph{Trans.\ London Math.\ Soc.}, to appear; arXiv:1303.0020.

\bibitem{kanialaustsen} T.~Kania and N.~J.~Laustsen, Uniqueness of the
  maximal ideal of the Banach algebra of bounded operators on
  $C([0,\omega_1])$, \emph{J.~Funct.\ Anal.}~\textbf{262} (2012),
  4831--4850.

\bibitem{KL} T.~Kania and N.~J.~Laustsen, Operators on two Banach
  spaces of continuous func\-tions on locally compact spaces of
  ordinals, submitted;
  arXiv:1304.4951.

\bibitem{Koszmidermrowka} P.~Koszmider, On decompositions of Banach
  spaces of continuous functions on Mr\'{o}wka's spaces, \emph{Proc.\
    Amer.\ Math.\ Soc.}~\textbf{133} (2005), 2137--2146.

\bibitem{lauJLMS} N.~J.~Laustsen, $K$-theory for algebras of operators
  on Banach spaces, \emph{J.~London Math.\ Soc.}~\textbf{59} (1999),
  715--728.

\bibitem{laustsenKtheory} N.~J.~Laustsen, $K$-theory for the Banach
  algebra of operators on James's quasi-reflexive Banach spaces,
  \emph{$K$-Theory}~\textbf{23} (2000), 115--127.

\bibitem{laustsenMaximal} N.~J.~Laustsen, Maximal ideals in the
  algebra of operators on certain Banach spaces, \emph{Proc.\ Edinburgh
  Math.\ Soc.}~\textbf{45} (2002), 523--546.

\bibitem{laustsenCommutators} N.~J.~Laustsen, Commutators of operators
  on Banach spaces, \emph{J.~Operator Theory}~\textbf{48}, 2002.

\bibitem{rll} M.~R{\o}rdam, F.~Larsen and N.~J.~Laustsen, \emph{An
    introduction to $K$-theory for $C^*$\nobreakdash-algebras,} LMS
  Student Texts~49, Cambridge University Press, Cambridge, 2000.

\bibitem{ss} S.~Shelah and J.~Stepr\={a}ns, A Banach space on
  which there are few operators,
  \emph{Proc.\ Amer.\ Math.\ Soc.}~\textbf{104} (1988), 101--105.

\bibitem{wark} H.~M.~Wark, A non-separable reflexive Banach space on
  which there are few operators, \emph{J.~London
    Math.\ Soc.}~\textbf{64} (2001), 675--689.
\end{thebibliography}

\bigskip
\end{document}